\numberwithin{equation}{section}
\newtheorem{thm}{Theorem}[section]
\newtheorem{lem}[thm]{Lemma}
\newtheorem{prop}[thm]{Proposition}
\theoremstyle{definition}
\theoremstyle{remark}
 \newtheorem{rmk}[thm]{Remark}
\newcommand{\R}{\mathbb{R}}
\newcommand{\N}{\mathbb{N}}
\newcommand{\V}{\mathcal V}
\newcommand{\Z}{\mathcal Z}
\newcommand{\diam}{\operatorname{diam}}
\newcommand{\sq}{\mathcal{S}_d^Q}
\newcommand{\hq}{\mathcal{H}_d^Q}
\newcommand{\sn}{\mathcal{S}_d^n}
\newcommand{\hn}{\mathcal{H}_d^n}
\newcommand{\hqi}{\mathcal{H}_{d_\infty}^Q}
\newcommand{\sqi}{\mathcal{S}_{d_\infty}^Q}
\newcommand{\G}{\mathcal{G}}
\newcommand{\spa}{\operatorname{span}}
\newcommand{\p}{\partial}
\newcommand{\scal}[2]{\langle{#1},{#2}\rangle}
\newcommand{\cut}{\operatorname{\mathcal{C}}}
\newcommand{\F}{\mathcal{F}}
\newcommand{\End}{\operatorname{End}}
\newcommand{\I}{\operatorname{Id}}
\newcommand{\eps}{\varepsilon}
\newcommand{\Hn}{{\mathbb{H}^n}}
\newcommand{\Rn}{\mathbb{R}^{n}}
\newcommand{\leb}{\mathcal{L}^{2n+1}}
\newcommand{\lnn}{\mathcal{L}^{2n}}
\newcommand{\lun}{\mathcal{L}^{1}}
\newcommand{\Rnn}{\mathbb{R}^{2n}}
\newcommand{\vp}{\varphi}
\begin{document}

\title{Isodiametric inequality in Carnot groups}

\author[S\'everine Rigot]{S\'everine Rigot}
\address[S\'everine Rigot]{Universit\'e de Nice Sophia-Antipolis, Laboratoire J.-A. Dieudonn\'e, CNRS-UMR 6621, Parc Valrose, 06108 Nice cedex 02, France}
\email{rigot@unice.fr}

\keywords{Isodiametric Inequality, Homogeneous Groups, Densities}

\subjclass[2000]{28A75 (53C17)}


\begin{abstract} The classical isodiametric inequality in the Euclidean space says that balls maximize the volume among all sets with a given diameter. We consider in this paper the case of Carnot groups. We prove that for any Carnot group equipped with a Haar measure one can find a homogeneous distance for which this fails to hold. We also consider Carnot-Carath\'eodory distances and prove that this also fails for these distances as soon as there are length minimizing curves that stop to be minimizing in finite time. Next we study some connections with the comparison between Hausdorff and spherical Hausdorff measures, rectifiability and the generalized 1/2-Besicovitch conjecture giving in particular some cases where this conjecture fails.
\end{abstract}

\maketitle

\section{Introduction}
\label{sec:intro}

The classical isodiametric inequality in the Euclidean space $\R^n$ says that balls maximize the volume among all sets with prescribed diameter,
\begin{equation*}
 \mathcal{L}^n (A) \leq 2^{-n} \alpha_n (\diam A)^n~,~\quad A\subset\R^n~,
\end{equation*}
where $\mathcal{L}^n$ denotes the - $n$-dimensional - Lebesgue measure and $\alpha_n$ the Lebesgue measure of the unit ball in $\R^n$. This gives non trivial information about the geometry of the Euclidean space and about the relation between the Euclidean metric and the Lebesgue measure. In particular a well-known consequence of the isodiametric inequality is the following relation between the $n$-dimensional Lebesgue measure and the $n$-dimensional Haudorff measure defined with respect to the Euclidean distance,
\begin{equation*}
 \mathcal{L}^n =  2^{-n} \alpha_n \mathcal{H}^n~,
\end{equation*}
where $\mathcal{H}^n(A) = \lim_{\delta\downarrow 0} \inf \left\{ \sum_i (\diam A_i)^n~;~A\subset \cup_i A_i~,~\diam A_i\leq \delta\right\}$.

\smallskip
More generally when working on analysis on metric spaces it is a natural question to ask in what kind of settings one can have some - and what - type of isodiametric inequalities and what kind of properties can be deduced from this information. 

\smallskip
In the present paper we are interested in Carnot groups equipped with homogeneous distances. A Carnot group is a connected, simply connected and stratified Lie group. We denote by $Q$ its homogeneous dimension. It can be equipped with a natural family of dilations. A distance $d$ on a Carnot group $G$ is called homogeneous if it induces the topology of the group, is left invariant and one-homogeneous with respect to the dilations. See Section~\ref{sec:carnotgroup} for the definitions. The simplest example of a non trivial Carnot group are the Heisenberg groups. From a metric point of view, natural measures on $(G,d)$ are given by the $Q$-dimensional Hausdorff and spherical Hausdorff measures. They turn moreover out to be non trivial left invariant measures, hence Haar measures of the group, and $Q$-homogeneous with respect to the dilations, so that they are also well-behaved and natural measures to be considered with respect to the structures the space is endowed with. 

\smallskip
We will use throughout this paper the following general definitions and conventions about Hausdorff and spherical Hausdorff measures. Given a metric space $(M,d)$ and a non negative number $n$, we denote by $\hn$ the $n$-dimensional Hausdorff measure given by
\begin{equation*}
 \hn(A)=\lim_{\delta\downarrow 0}~\inf\{~\sum_i (\diam A_i)^n~;~A\subset \cup_i A_i~,~\diam A_i\leq \delta~\},
\end{equation*}
and by $\sn$ the $n$-dimensional spherical Hausdorff measure,
\begin{equation*}
 \sn(A)=\lim_{\delta\downarrow 0}~\inf\{~\sum_i (\diam B_i)^n~;~A\subset \cup_i B_i~,~ B_i \text{ ball}~,~\diam B_i\leq \delta~\}.
\end{equation*}

For a Carnot group equipped with a homogeneous distance $(G,d)$ we will more specifically take as a choice of a reference measure the $Q$-dimensional spherical Hausdorff measure $\sq$. A first fact supporting our choice is that 
\begin{equation*} \label{e:measball}
 \sq(B) = (\diam B)^Q \text{ for any metric ball } B\text{ in } (G,d)~,
\end{equation*}
see Proposition~\ref{prop:sq}. Moreover it turns out that the isodiametric problem can be rephrased in simple terms as we shall explain now.

\smallskip
Going back to the isodiametric problem one seeks for the maximal possible value of the measure of sets with a given diameter. Using dilations this can be rephrased in our context into the problem of finding the maximal possible value of the ratio $\sq(A)/(\diam A)^Q$ among all subsets $A\subset G$ with positive and finite diameter. We denote by $C_d$ this value and call it the isodiametric constant,
\begin{equation} \label{e:cd}
 C_d = \sup\{~\dfrac{\sq(A)}{(\diam A)^Q}~;~0<\diam A <+\infty~\}.
\end{equation}
It obviously follows that one has the following inequality,
\begin{equation*}
\sq(A) \leq C_d~(\diam A)^Q~,~\quad A\subset G~,
\end{equation*}
where $0<C_d<+\infty$ is by definition the best possible constant in the right-hand side. Since $\sq(B) = (\diam B)^Q$ for balls $B$, one has actually $C_d\geq 1$. Hence the best possible inequality one can expect is the following,
\begin{equation} \label{e:ii} \tag{SII}
 \sq(A) \leq (\diam A)^Q~,~\quad A\subset G~,
\end{equation}
that we call the sharp isodiametric inequality. 

\smallskip
We are interested in this paper in the question to know whether the sharp isodiametric inequality holds, or equivalently if balls realize the supremum in the right-hand side of \eqref{e:cd}, in Carnot groups equipped with some specific homogeneous distances. Note that it is not difficult to see that in our context one can always find sets realizing the supremum in \eqref{e:cd}. We shall prove this fact for sake completeness, see Theorem~\ref{thm:existence}. The isodiametric problem can thus be pursued with the question of finding explicitly which are these sets and this will be explored in forthcoming works (see~\cite{lrv}). 

\smallskip
In the present paper we first prove that given a non trivial Carnot group $G$ one can always find some homogeneous distance on $G$, namely $d_\infty$-distances (see~\ref{ssubsec:dinfty}), for which the sharp isodiametric inequality does not hold, see Theorem~\ref{thm:dinfty}. For the specific class of $H$-type groups equipped with the gauge distance (see~\ref{ssubsec:typeH}) we prove with a similar argument that \eqref{e:ii} does not hold either, see Theorem~\ref{thm:dg}. Going back to general Carnot groups, another kind of natural homogeneous distances are the so-called Carnot-Carath\'eodory (or sub-Riemannian) distances (see~\ref{ssubsec:dc}). Equipped with such a distance a Carnot group becomes a geodesic space. We prove that the sharp isodiametric inequality also fails in that case provided there are length minimizing curves that stop to be minimizing in finite time, see Theorem~\ref{thm:dc}. This property holds true in particular in the Heisenberg groups and more generally in $H$-type groups. 

\smallskip
We will then investigate some consequences of these results. One of our initial motivations in the study of the isodiametric problem was indeed some   connections with geometric measure theoretic properties of the space and in particular rectifiability.

\smallskip
First we explicitly state and prove that in our context the comparison between the $Q$-dimensional Hausdorff and spherical Hausdorff measures can be related to the isodiametric problem in the following way,
\begin{equation*}
 \sq = C_d~\hq~,
\end{equation*}
see Proposition~\ref{prop:sqhq}. It follows that 
\begin{equation*}
 \eqref{e:ii} \text{ holds in } (G,d)~\Longleftrightarrow~\hq = \sq~.
\end{equation*}
As a consequence we immediately get that in the cases we are more specifically interested in and where \eqref{e:ii} does not hold, the measures $\hq$ and $\sq$ do not coincide.

\smallskip 
As another consequence of the non-validity of the sharp isodiametric inequality for at least one homogeneous distance, we recover the fact that any Carnot group equipped with a homogeneous distance and with homogeneous dimension $Q$ is purely $Q$-unrectifiable, see Subsection~\ref{subsec:nonrect}. This was already known, see e.g.~\cite{ambkirc},~\cite{magnani}. Existing proofs of this fact, and more generally of related facts, basically involve some algebraic properties of some subgroups of a Carnot group. The point here is that we give a different proof that uses only purely metric arguments. 

\smallskip 
Finally we investigate some connections with the Besicovitch 1/2-problem. First we will note that the density constant $\sigma_Q(G,d)$ (see Subsection~\ref{subsec:bes} for the definition) of a non trivial Carnot group $G$ with homogeneous dimension $Q$ and equipped with a homogeneous distance $d$ satisfies $\sigma_Q(G,d)=C_d^{-1}$, see Theorem~\ref{thm:densconst}. In particular $\sigma_Q(G,d)<1$ whenever the sharp isodiametric inequality does not hold in $(G,d)$. The validity of the bound $\sigma_n(M,d)\leq 1/2$ for any separable metric space $(M,d)$, which was conjectured long ago by A.S. Besicovitch for the one-dimensional density constant in $\R^2$ (see~\cite{besi2}), is known as the generalized Besicovitch 1/2-problem. We give here some counterexamples to this conjecture in the case of the Heisenberg groups equipped with their natural $d_\infty$-distance as well as with the  Carnot-Carath\'eodory distance, see Theorem~\ref{prop:1/2bes-dinfty} and Theorem~\ref{prop:1/2bes-dc}. 

\smallskip 
The paper is organized as follows. In Section~\ref{sec:carnotgroup} we recall the definition of a Carnot group and we introduce the homogeneous distances on these groups we are interested in. Next in this section  we state and prove some general facts about $Q$-dimensional Hausdorff and spherical Hausdorff measures on these groups equipped with homogeneous distances as mentionned before. Section~\ref{sec:isodiam} is devoted to the isodiametric problem itself. We give in this section the proof that the sharp isodiametric inequality does not hold in the cases mentionned above. Finally we investigate in section~\ref{sec:cons} the consequences of this fact described at the end of this introduction.

\section{Carnot groups}
\label{sec:carnotgroup}

\subsection{Carnot groups}
\label{subsec:carnotgroup}
A Carnot group $G$ is a connected and simply connected Lie group whose Lie algebra $\G$ admits a
stratification, 
\begin{equation*}
\G = \oplus_{j=1}^k V_j~,~[V_1,V_j] = V_{j+1}~,~V_k\not= \{0\}~,~V_{k+1} = \{0\}~,
\end{equation*}
for some integer $k\geq 1$ called the step of the stratification.

The exponential map $\exp:\G \rightarrow G$ is then a global diffeomorphism and the group
law is given by the Campbell-Hausdorff formula, 
\begin{equation*}
\exp X \cdot \exp Y = \exp H(X,Y)~,
\end{equation*}
where $H(X,Y) = X+Y+[X,Y]/2+\cdots$ where the dots indicate terms of order $\geq 3$ (the exact formula for $H$ may be found in \cite[II.6.4]{bourbaki}). We will denote by $0$ the unit element of the group. 

We denote by 
\begin{equation*}
 Q=\sum_{j=1}^k j\dim V_j
\end{equation*}
the homogeneous dimension of $G$.

A natural family of dilations on $\G$ is given by $\delta_\lambda(\sum_{j=1}^k Y_j) = \sum_{j=1}^k \lambda^j\,Y_j$, $Y_j\in V_j$, $\lambda>0$. The maps $\exp\circ\delta_\lambda\circ\exp^{-1}$ are group automorphisms of $G$. We shall denote them also by $\delta_\lambda$ and call them dilations on $G$. 

We refer to \cite{fs} for a more detailed presentation of Carnot, and more generally homogeneous, groups.

\subsection{Homogeneous distances}
\label{subsec:homodist}
A distance $d$ on a Carnot group $G$ is called homogeneous if it induces the topology of the group, is left invariant,
\begin{equation*}
 d(x\cdot y,x\cdot z)=d(y,z)
\end{equation*}
for all $x$, $y$, $z \in G$, and one-homogeneous with respect to the dilations, 
\begin{equation*}
 d(\delta_\lambda(y),\delta_\lambda(z))=\lambda\,d(y,z)
\end{equation*}
for all $y$, $z \in G$ and $\lambda>0$. 

A Carnot group equipped with a homogeneous distance is a separable and complete metric space in which closed bounded sets are compact. We also explicitly note that in this context the diameter of a ball is given by twice its radius. There are many ways to define homogeneous distances on a Carnot group. We shall see some examples below. As a general fact we also mention that any two homogeneous distances on a Carnot group are bilipschitz equivalent.

\subsubsection{$d_\infty$-distances}
\label{ssubsec:dinfty}
The first kind of homogeneous distances we will consider in this paper is the class of so-called $d_\infty$-distances. They can be defined in the following way. First one chooses a basis $(X_1,\dots,X_n)$ of $\G$ adapted to the stratification, i.e., $(X_{h_{j-1}+1},\dots,X_{h_j})$ is a basis of $V_j$ where $h_0=0$ and $h_j=\dim V_1+\dots+\dim V_j$. Next one defines an Euclidean norm $\|\cdot\|$ on
$\G$ by declaring the $X_j$'s orthonormal. Then one fix some
positive coefficients $c_j$ so that $\|H(Y,Z) \|_\infty \leq \|Y\|_\infty +
\|Z\|_\infty$ where $\|Y\|_\infty =  \max_j c_j\|Y_j\|^{1/j}$ whenever
$Y=Y_1+\dots+Y_k$, $Y_j\in V_j$. It turns out that one can always find such a family of coefficients (see e.g.~\cite{fssc}). We then set
\begin{equation*}
 \|x\|_\infty = \|\exp^{-1}x\|_\infty \quad \text{and} \quad d_\infty(x,y) = \|x^{-1} \cdot y\|_\infty.
\end{equation*}
It is easy to check that $d_\infty$ is a homogeneous distance on $G$. 

\subsubsection{Carnot-Carath\'eodory distances}
\label{ssubsec:dc}
The second kind of homogeneous distances we wish to consider are the so-called Carnot-Carath\'eodory, also known as sub-Riemannian, distances which we denote by $d_c$. We fix a left invariant Riemannian metric $g$ on $G$ and we set 
\begin{equation*}
 d_c(x,y) = \inf \{ length_{g} (\gamma); \; \gamma \text{ horizontal curve joining } x \text{ to } y\},
\end{equation*}
where a curve is said to be horizontal if it is absolutely continuous and such that at a.e.~every point, its tangent vector belongs to the so-called horizontal subbundle of the tangent bundle whose fiber at some point $x$ is given by $\spa \{X(x);~X \in V_1\}$ when identifying elements in $V_1$ with left invariant vector fields. More generally one actually only needs a scalar product on the first layer $V_1$ of the stratification to define the related Carnot-Carath\'eodory distance in a similar way. 

Recall that by Chow's theorem any two points can be joined by a horizontal curve and $d_c$ turns indeed out to be distance. One can also easily check that it is homogeneous.

An important feature of Carnot-Carath\'eodory distances that makes them natural distances from the geometric point of view is that, endowed with such a distance, a Carnot group becomes a geodesic space, i.e., for all $x$, $y\in G$, there exists a - so-called length minimizing - curve $\gamma \in C([a,b],G)$ such that $\gamma(a)=x$, $\gamma(b)=y$ and $d_c(x,y) = l_{d_c}(\gamma)$ where 
\begin{equation*}
 l_{d_c}(\gamma) = \sup_{N\in  \N^*} \sup_{a=t_0\leq \dots \leq t_N = b} \sum_{i=0}^{N-1} d_c(\gamma(t_i),\gamma(t_{i+1})).
\end{equation*}
Up to a suitable reparameterization, one can - and we will henceforth always assume that - rectifiable curves $\gamma\in C([a,b],G)$, i.e. curves with $l_{d_c}(\gamma)<+\infty$, have constant speed, i.e., $(b-a) \, l_{d_c}(\gamma_{|[s,s']}) = (s'-s)\, l_{d_c}(\gamma)$ for all $s<s' \in [a,b]$. 

For $x\in G$, we denote by $\cut(x)$ the set of points $y\in G$, $y\not= x$, such that one can find a length minimizing curve
$\gamma:[a,b]\rightarrow G$ from $x$ to $y$ which is no more length minimizing after reaching $y$. In other words, if $T>0$ and
$c:[a,b+T]\rightarrow G$ is a rectifiable curve such that $c_{|[a,b]}=\gamma$ then $c$ is not length minimizing on $[a,b+T]$. We note that due to the left invariance of the distance $d_c$, if $\cut(x) \not= \emptyset$ for some $x \in G$ then $\cut(x) \not= \emptyset$ for all $x \in G$. 

We say that $(G,d_c)$ satifies assumption~\eqref{e:cut} if
\begin{equation} \label{e:cut} \tag{$\mathcal{C}$}
 \cut(x) \not= \emptyset \text{ for some, and hence all, } x\in G.
\end{equation}

Note also that due to the homogeneity of $d_c$, if $(G,d_c)$ satifies assumption \eqref{e:cut}, then for all $x\in G$ and $\rho>0$, one can find $y\in \cut(x)$ such that $d_c(x,y)=\rho$.

Heisenberg groups (see e.g. Section~\ref{sec:cons} for the description of one model for these groups) and more generally $H$-type groups (see below for the definition) satisfy assumption~\eqref{e:cut} (see e.g.~\cite{gaveau},~\cite{ambrig}, resp.~\cite{koranyi},~\cite{rigot}, for an explicit description of length minimizing curves in Heisenberg, resp. $H$-type, groups). For general Carnot groups it is to our knowledge a delicate question to know whether assumption~\eqref{e:cut} is satisfied or not. This question is in particular related to delicate issues about length minimizing curves in sub-Riemannian geometry. 

We will prove in this paper that the sharp isodiametric inequality does not hold in $(G,d_c)$ as soon as $(G,d_c)$ satifies assumption~\eqref{e:cut}.

\subsubsection{$H$-type groups and gauge distance.}
\label{ssubsec:typeH}
We recall here the definition of $H$-type groups. These Carnot groups are classical groups concerning many aspects of analysis, in particular Harmonic Analysis and PDE's, on homogeneous groups.  We will prove the non validity of the sharp isodiametric inequality for that class of groups equipped with the gauge distance. 

Let $\G$ be a Lie algebra equipped with a scalar product and which can be decomposed in a non trivial orthogonal direct sum, $\G = \V \oplus \Z$, where $[\V,\V]\subset\Z$ and $[\V,\Z]=[\Z,\Z]=\{0\}$. We define the linear map $J:\Z \rightarrow \End \V$ by $\scal{J(Z)X}{X'} = \scal{Z}{[X,X']}$ for all $X$, $X'\in\V$ and $Z\in\Z$. We say that $\G$ is a $H$-type algebra if for all $Z\in\Z$,
\begin{equation*} 
J(Z)^2 = -\|Z\|^2\,\I~.
\end{equation*}
Note that in such a case $[\V,\V]=\Z$ so that $\G$ is a stratified Lie algebra of step two. For more details about $H$-type algebras we refer to e.g.~\cite{Ka}. A Carnot group is said to be a $H$-type group if its Lie algebra is of $H$-type.

This class of groups gives a natural generalization of the Heisenberg groups which correspond to the case where $\dim \Z = 1$. 

The gauge distance $d_g$ on a $H$-type group $G$ is defined in the following way. One defines a homogeneous norm on $G$ by 
\begin{equation*}
\|\exp(X+Z)\|_g = (\|X\|^4 + 16 \,\|Z\|^2)^{1/4}
\end{equation*}
for all $X\in\V$ and $Z\in\Z$ where the norm in the right-hand side is the norm induced by the given scaler product on $\G$. By \cite{Cy}, this norm satisfies $\|x \cdot y\|_g\leq \|x\|_g + \|y\|_g$ for all $x$, $y\in G$, and one then defines the gauge distance between any two points $x$, $y\in G$ by
\begin{equation*}
d_g(x,y) = \|x^{-1}\cdot y\|_g~.
\end{equation*}
which is clearly homegeneous.

\subsection{Hausdorff measures}
\label{sec:hausdorffmeas}
As a classical fact the Hausdorff dimension of a Carnot group equipped with a homogeneous distance $(G,d)$ coincides with its homogeneous dimension $Q$. Moreover both measures $\hq$ and $\sq$ (see Section~\ref{sec:intro} for the definitions and conventions about Hausdorff measures) are left invariant measures which give positive and finite measure to any ball with positive and finite diameter and hence are Haar measures of the group. These measures are also $Q$-homogeneous with respect to the dilations of the group,
\begin{equation*}
 \hq(\delta_\lambda (A)) = \lambda^Q~\hq(A)~, \quad \sq(\delta_\lambda (A)) = \lambda^Q~\hq(A)~,
\end{equation*}
for all $A\subset G$ and $\lambda>0$. As a consequence any Haar measure $\mu$ of the group is $Q$-homogeneous as well and in particular we have $\mu(B) = C~(\diam B)^Q$ for any ball $B$ in $(G,d)$ and for some constant $C>0$ which depends only on the homogeneous distance $d$ and the Haar measure $\mu$. As already mentionned in the introduction we will take as a reference measure in $(G,d)$ the measure $\sq$. The first observation supporting this choice is the following fact which gives in our context the exact - and particularly simple - value of the $\sq$-measure of a ball.

\begin{prop} \label{prop:sq}
 Let $(G,d)$ be a Carnot group equipped with a homogeneous distance. Let $Q$ denote its homogeneous dimension. Then
\begin{equation*}
 \sq(B) = (\diam B)^Q
\end{equation*}
for any ball $B$ in $(G,d)$.
\end{prop}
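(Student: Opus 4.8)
The plan is to show that the (a priori unknown) normalising constant in the ball-volume formula for $\sq$ equals $1$. Recall from the discussion preceding the statement that $\sq$ is a Haar measure which is $Q$-homogeneous, so that by left invariance and homogeneity every ball satisfies $\sq(B)=C\,(\diam B)^Q$ for one and the same constant $C=2^{-Q}\sq(B(0,1))>0$; indeed, writing a ball of radius $r$ as a left translate of $\delta_r(B(0,1))$ gives $\sq(B(x,r))=r^Q\sq(B(0,1))$ while $\diam B(x,r)=2r$. The whole content of the proposition is thus the identity $C=1$. Before estimating $C$ I would record two facts. First, open and closed balls of the same radius have the same $\sq$-measure, hence spheres are $\sq$-negligible: since $\overline{B}(0,s)\subset B(0,r)$ for $s<r$ and $B(0,r)=\bigcup_{s<r}\overline{B}(0,s)$, homogeneity yields $\sq(B(0,r))=\lim_{s\uparrow r}s^Q\sq(\overline{B}(0,1))=\sq(\overline{B}(0,r))$. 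Second, $\sq$ is a Radon measure which is doubling, because $\sq(\overline{B}(x,2r))=2^Q\sq(\overline{B}(x,r))$; in particular the Vitali covering theorem is available for $\sq$ on $(G,d)$, which is separable and complete with compact closed bounded sets.

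The lower bound $\sq(B)\geq(\diam B)^Q$ is immediate from countable subadditivity. If $\{B_i\}$ is any countable family of balls covering $B$ with $\diam B_i\leq\delta$, then
\[
C\,(\diam B)^Q=\sq(B)\leq\sum_i\sq(B_i)=C\sum_i(\diam B_i)^Q,
\]
so $\sum_i(\diam B_i)^Q\geq(\diam B)^Q$. Taking the infimum over such covers and letting $\delta\downarrow0$ gives $\sq(B)\geq(\diam B)^Q$, that is $C\geq1$.

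For the reverse inequality I would produce, for each $\delta>0$ and each $\eps>0$, an efficient ball cover of $B$. Fix a ball $B=\overline{B}(x_0,r_0)$ and consider the family of all closed balls contained in the open ball $B(x_0,r_0)$ of diameter at most $\delta$; this is a fine cover of $B(x_0,r_0)$. By the Vitali covering theorem there is a countable disjoint subfamily $\{B_i\}$ with $\sq\big(B(x_0,r_0)\setminus\bigcup_i B_i\big)=0$. Since the $B_i$ are disjoint and contained in $B$,
\[
C\sum_i(\diam B_i)^Q=\sum_i\sq(B_i)=\sq\Big(\bigcup_i B_i\Big)\leq\sq(B)=C\,(\diam B)^Q,
\]
hence $\sum_i(\diam B_i)^Q\leq(\diam B)^Q$. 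The leftover set $N=B\setminus\bigcup_iB_i$ has $\sq(N)=0$ (it is contained in the sphere $\partial B$, which is $\sq$-negligible, together with the Vitali remainder), so by the definition of $\sq$ it can be covered by balls of diameter at most $\delta$ the sum of whose diameters raised to the power $Q$ is less than $\eps$. Adjoining these to the $B_i$ produces a $\delta$-cover of $B$ by balls with $\sum(\diam)^Q\leq(\diam B)^Q+\eps$. Letting $\delta\downarrow0$ and $\eps\downarrow0$ yields $\sq(B)\leq(\diam B)^Q$, i.e. $C\leq1$, and combined with the lower bound this proves $\sq(B)=(\diam B)^Q$.

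The main obstacle, and the only non-formal point, is the sharp upper bound: a crude $5r$-type covering would lose a dimensional factor, so the argument genuinely relies on the Vitali covering theorem to cover $B$ by small disjoint balls up to an $\sq$-null set \emph{without} enlarging them. Verifying the hypotheses for Vitali (that $\sq$ is a doubling Radon measure, which rests on the already-established ball-volume formula) and checking that the uncovered remainder is $\sq$-negligible are the steps I would take most care over.
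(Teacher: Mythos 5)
Your proof is correct and follows essentially the same route as the paper's: the lower bound by countable subadditivity against the normalized Haar measure, and the upper bound by extracting a countable disjoint subfamily of small balls covering the ball up to an $\sq$-null set and then estimating the pre-measure $\mathcal{S}^Q_{d,\delta}$ before letting $\delta\downarrow 0$. The only difference is cosmetic: the paper invokes Federer's adequate-family covering theorem (via the $\hat F$ enlargement condition) applied to a Haar measure, whereas you invoke the Vitali covering theorem for the doubling Radon measure $\sq$ directly; these are the same tool.
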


This follows from classical covering arguments and might be well-known as well although to our knowledge not explicitly stated that way in the literature. We give a proof below for sake of completeness. We first recall the general covering arguments to be used. Let $\F$ be a family of subsets of $G$. We say that $\F$ covers a set $A$ finely if for all $a\in A$ and all $\eps>0$, there exists $F\in\F$ such that $a\in F$ and $\diam F \leq \eps$. We say that $\F$ is adequate for a set $A$ if for any open set $U$ there exists a countable disjointed subfamily $\tilde\F$ of $\F$ such that $\cup_{F\in\tilde\F} F \subset U$ and $\mu((U\cap A)\setminus \cup_{F\in\tilde\F} F) = 0$ where $\mu$ denotes any Haar measure of the group. If $F\in\F$, we set $\hat F = \cup \{T\in\F~;~T\cap F \not = \emptyset~,~\diam (T) \leq 2\,\diam (F)\}$.

\begin{thm} \cite[Chapter 2.8]{Fed} 
\label{thm:covering}
Assume that all sets in $\F$ are closed and bounded and that $\mu(F)>0$ for all $F\in\F$. If $\F$ covers a set $A$ finely and if there exists $\tau\geq 1$ such that $\mu(\hat F) \leq \tau \mu (F)$ for all $F\in\F$, then $\F$ is adequate for $A$.
\label{cov}\end{thm}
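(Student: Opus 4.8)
The plan is to prove this by the classical greedy (Vitali-type) selection, with the enlargement hypothesis $\mu(\hat F)\le\tau\mu(F)$ providing the only quantitative input. Fix an open set $U$. Since a Haar measure on a Carnot group is $\sigma$-finite, I would first reduce to the case $\mu(U)<+\infty$ by exhausting $U$ by open subsets of finite measure and combining the resulting selections in the usual way. I would also replace $\F$ by the subfamily $\F_U=\{F\in\F~;~F\subset U\}$: because $U$ is open and the members of $\F$ are closed with arbitrarily small diameter around each point, $\F_U$ still covers $A\cap U$ finely, and since shrinking the family only shrinks each $\hat F$, the bound $\mu(\hat F)\le\tau\mu(F)$ is inherited.

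Next I would select a disjointed subfamily by transfinite recursion: having chosen pairwise disjoint $F_\beta$ for $\beta<\alpha$, set $\F_\alpha=\{F\in\F_U~;~F\cap F_\beta=\emptyset\text{ for all }\beta<\alpha\}$, and if $\F_\alpha\neq\emptyset$ pick $F_\alpha\in\F_\alpha$ with $\diam F_\alpha>\tfrac12\sup\{\diam F~;~F\in\F_\alpha\}$. The factor $1/2$ here is exactly what matches the factor $2$ in the definition of $\hat F$. Because the chosen sets are pairwise disjoint and have positive measure while $\mu(U)<+\infty$, only countably many can be selected, so the recursion stops at a countable stage with $\F_\alpha=\emptyset$; reindexing, I obtain a countable disjointed family $\{F_i\}$ with $\sum_i\mu(F_i)\le\mu(U)<+\infty$ and with the crucial exhaustion property that \emph{every} $F\in\F_U$ meets some $F_i$.

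The heart of the argument is then the covering claim that $(A\cap U)\setminus\bigcup_i F_i\subset\bigcup_{i>N}\hat F_i$ for every $N$. To see it, take $a$ in the left-hand set; since $\bigcup_{i\le N}F_i$ is closed and misses $a$, fine covering provides $F\in\F_U$ with $a\in F$ and $F\cap F_i=\emptyset$ for $i\le N$. By exhaustion $F$ meets some $F_i$; let $F_j$ be the first one it meets, so $j>N$ and $F$ was still available when $F_j$ was chosen. Hence $\diam F\le\sup\{\diam T~;~T\in\F_U,\ T\cap F_i=\emptyset\text{ for }i<j\}<2\diam F_j$, and since $F\cap F_j\neq\emptyset$ this forces $F\subset\hat F_j$. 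The enlargement hypothesis now gives $\mu\big((A\cap U)\setminus\bigcup_i F_i\big)\le\sum_{i>N}\mu(\hat F_i)\le\tau\sum_{i>N}\mu(F_i)$, and letting $N\to\infty$ the right-hand side tends to $0$ as the tail of a convergent series. Thus $\{F_i\}$ witnesses adequacy.

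I expect the main obstacle to be precisely the exhaustion step and its interaction with the covering claim: one must run the selection until no admissible set remains, rather than merely picking sets of decreasing diameter, and the non-obvious point is that this terminates countably (disjoint positive-measure sets in a finite-measure space are countable) and that every leftover point is then trapped in the enlargement of a \emph{late}, hence small-measure, selected set. By contrast, the reduction handling open sets of infinite measure, and the verification that passing to $\F_U$ preserves both the fine-covering property and the enlargement bound, are routine and should only be carried out explicitly for completeness.
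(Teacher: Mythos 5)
The paper gives no proof of this theorem at all --- it is quoted from Federer \cite[2.8]{Fed} --- so the relevant comparison is with the classical covering argument, and your proposal is essentially that argument: a disjoint selection run to exhaustion, with the enlargement hypothesis $\mu(\hat F)\leq\tau\mu(F)$ controlling the leftover set. Your proof is correct in substance, and you correctly isolate the real subtlety: for general sets (unlike balls) small measure does not force small diameter, so a plain $\omega$-sequence of greedy choices need not meet every member of $\F_U$; exhaustion has to be built in by running the selection until no admissible set remains (transfinite recursion, or equivalently a maximality principle, which is how Federer arranges it via dyadic diameter scales).

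Two technical wrinkles should be repaired. First, the quantity $\sup\{\diam F~;~F\in\F_\alpha\}$ may be infinite --- sets of arbitrarily large diameter and small measure can sit inside $U$ even when $\mu(U)<+\infty$ --- and then no $F_\alpha$ with $\diam F_\alpha>\tfrac12\sup\{\diam F~;~F\in\F_\alpha\}$ exists. The fix is standard: replace $\F_U$ from the start by $\{F\in\F~;~F\subset U,\ \diam F\leq 1\}$, which still covers $A\cap U$ finely and only shrinks each $\hat F$. Second, your covering claim silently conflates two orderings. If the recursion runs past $\omega$ (which can genuinely happen), no re-indexing of the selected family by $\N$ preserves the selection order, so ``the first $F_j$ that $F$ meets'' and the availability set $\{T~;~T\cap F_i=\emptyset \text{ for } i<j\}$ must both be read in the \emph{selection} (ordinal) order, not in the re-indexed order. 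The argument survives with correct bookkeeping: let $\beta$ be the least ordinal with $F\cap F_\beta\neq\emptyset$; then $F$ was available at stage $\beta$, hence $\diam F<2\diam F_\beta$ and $F\subset\hat F_\beta$; and since $F$ is disjoint from $F_1,\dots,F_N$ (natural indices) but meets $F_\beta$, the natural index of $F_\beta$ is necessarily $>N$, which is all the tail estimate needs. With these repairs, and with the routine decomposition of an open set of infinite measure into countably many disjoint open pieces of finite measure plus a $\mu$-null set (metric spheres are $\mu$-null here, since $\mu(B)$ is proportional to $(\diam B)^Q$), your proof is complete and matches the cited one in approach.
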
 

\noindent\textit{Proof of Proposition~\ref{prop:sq}.}
Let $\nu$ denote the Haar measure of the group normalized in such a way that $\nu(B)= (\diam B)^Q$ for any ball $B$ in $(G,d)$. It will actually follow from the proposition that $\sq=\nu$. Let $B$ be a ball in $(G,d)$. Let $\eps>0$ be fixed and $(B_i)_i$ be a countable family of balls such that $B\subset \cup_i B_i$ and $\sum_i (\diam B_i)^Q \leq \sq(B) +\eps$. We have
\begin{equation*}
 \nu(B) \leq \sum_i \nu(B_i) = \sum_i (\diam B_i)^Q \leq \sq(B) +\eps.
\end{equation*}
Since this is true for all $\eps>0$, we get that $(\diam B)^Q=\nu(B)\leq \sq(B)$. 

Conversely let $\delta>0$ be fixed and $\F$ denote the family of closed balls in $(G,d)$ with positive diameter less than $\delta$. Let $B$ be an open ball. Then $\F$ covers $B$ finely and satisfies the assumptions of Theorem~\ref{thm:covering}. Hence $\F$ is adequate for $B$ and one can find a countable disjointed subfamily $\tilde \F \subset \F$ such that $\cup_{F\in \tilde \F} F \subset B$ and $\sq(B\setminus \cup_{F\in \tilde \F} F) = 0$. It follows that
\begin{equation*}
 \mathcal{S}^Q_{d,\delta} (\cup_{F\in \tilde \F} F) \leq \sum_{F\in \tilde \F} (\diam F)^Q = \nu(\cup_{F\in \tilde \F} F) \leq \nu(B)
\end{equation*}
where $\mathcal{S}^Q_{d,\delta} (A)= \inf \{ \sum_i (\diam B_i)^Q~;~A\subset \cup_i B_i~,~ B_i \text{ ball}~,~\diam B_i\leq \delta\}$ for any $A\subset G$. We have $\mathcal{S}^Q_{d,\delta}(B) \leq \mathcal{S}^Q_{d,\delta}(\cup_{F\in \tilde \F} F) + \mathcal{S}^Q_{d,\delta}(B\setminus \cup_{F\in \tilde \F} F)$ and $\mathcal{S}^Q_{d,\delta}(B\setminus \cup_{F\in \tilde \F} F) \leq \sq(B\setminus \cup_{F\in \tilde \F} F) = 0$. It follows that 
\begin{equation*}
 \mathcal{S}^Q_{d,\delta}(B) \leq \nu(B).
\end{equation*}
Letting $\delta\downarrow 0$, we get that $\sq(B) \leq \nu(B)$. 

Hence the claim follows for any open, and then also automatically for any closed, ball. \hfill $\Box$
\medskip

Next we note and explicitly prove that the ratio between $\hq$ and $\sq$ can be related to the isodiametric constant $C_d$ (see~\eqref{e:cd} for the definition of $C_d$).

\begin{prop} \label{prop:sqhq}
 Let $(G,d)$ be a Carnot group equipped with a homogeneous distance. Let $Q$ denote its homogeneous dimension. Then
\begin{equation*}
\sq = C_d~\hq~.
\end{equation*}
\end{prop}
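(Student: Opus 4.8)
The plan is to prove both inequalities $\sq \leq C_d \hq$ and $\sq \geq C_d \hq$ by comparing the two measures on arbitrary sets, exploiting the definition of $C_d$ from \eqref{e:cd}. The essential input is that $C_d$ controls the ratio $\sq(A)/(\diam A)^Q$ uniformly, which means that for balls the two Hausdorff-type measures differ by exactly a factor related to $C_d$, while for general covering sets one obtains the appropriate inequalities.

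First I would establish $\sq \leq C_d \hq$. Fix a set $A$ and $\delta>0$, and take any countable cover $A\subset \cup_i A_i$ with $\diam A_i \leq \delta$ to be used in computing $\hq$. By the definition of $C_d$ in \eqref{e:cd}, each $A_i$ satisfies $\sq(A_i) \leq C_d (\diam A_i)^Q$. For each $i$ choose a ball $B_i \supset A_i$ with $\diam B_i$ as close to $\diam A_i$ as desired (this is possible since any set of diameter $r$ is contained in a ball of diameter at most $2r$, but to get the sharp factor one argues more carefully via the spherical measure definition). The cleaner route is: since the $\sq$-measure is itself built from ball-covers, and $\sq(A_i)\leq C_d(\diam A_i)^Q$, summing gives $\sum_i \sq(A_i)$ controlled by $C_d \sum_i (\diam A_i)^Q$; using countable subadditivity of $\sq$ and then passing to the infimum over covers and letting $\delta\downarrow 0$ yields $\sq(A)\leq C_d\hq(A)$.

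For the reverse inequality $\sq(A) \geq C_d \hq(A)$, I would use the existence of sets realizing the supremum in \eqref{e:cd} (guaranteed by Theorem~\ref{thm:existence}), or more directly the definition of $C_d$ as a supremum. Given $\eta>0$, choose a set $E$ with $\sq(E) \geq (C_d - \eta)(\diam E)^Q$ and $0<\diam E<+\infty$. Using left-invariance, homogeneity under dilations $\delta_\lambda$, and the $Q$-homogeneity of both $\sq$ and $\hq$, one can rescale and translate copies of $E$ to build efficient covers: a cover of $A$ by balls used for $\sq$ can be replaced, ball by ball, by scaled copies of $E$ inside each ball, exploiting that $\sq(E)/(\diam E)^Q$ is nearly maximal. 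Passing to the infimum over ball-covers in the definition of $\sq$ and letting $\eta\downarrow 0$ should give $C_d\hq(A)\leq \sq(A)$. Since both bounds hold for every $A$, the measures satisfy $\sq = C_d\hq$.

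The main obstacle I expect is the reverse inequality, specifically the constructive step of filling an arbitrary covering set (or ball) with nearly-extremal scaled copies of $E$ while keeping the diameters controlled and the overlaps negligible. This requires a Vitali- or Federer-type covering argument analogous to the one used in the proof of Proposition~\ref{prop:sq}, combined with the homogeneity of the measures under dilations to ensure the ratio $\sq(E)/(\diam E)^Q$ is preserved under rescaling. The first inequality, by contrast, follows almost immediately from the definition of $C_d$ and countable subadditivity. One subtlety to handle carefully in both directions is the distinction between covers by arbitrary sets (for $\hq$) and covers by balls (for $\sq$); reconciling these is precisely where the constant $C_d$, encoding the extremal ratio, enters.
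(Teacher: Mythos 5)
Your proposal is correct and follows essentially the same route as the paper: the easy inequality $\sq \leq C_d\,\hq$ via countable subadditivity and the definition of $C_d$, and the reverse inequality via a Federer-type covering of an open set by disjoint translated and dilated copies of a (near-)extremal set, using the same covering theorem as in Proposition~\ref{prop:sq}. The only cosmetic differences are that the paper covers the open set $U$ directly by disjoint copies $F$ of the extremal set and uses them as a cover in the definition of $\hq$ (so that $C(E)\sum(\diam F)^Q=\sum\sq(F)=\sq(\cup F)\leq\sq(U)$ by disjointness), rather than first passing through a ball cover for $\sq$, and that it works with an arbitrary set $A$ of positive ratio $C(A)=\sq(A)/(\diam A)^Q$ and takes the supremum over $A$ at the end instead of fixing a near-maximizer.
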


\begin{proof}
 The proof is very similar to the proof of of Proposition~\ref{prop:sq}. First we note that it is enough to prove that $\sq(U) = C_d~\hq(U)$ for any open set $U$ with finite Haar measure. 

Let $\eps>0$ be fixed and $(U_i)_i$ be a countable family of bounded sets such that $U\subset \cup_i U_i$ and $\sum_i (\diam U_i)^Q \leq \hq(U) +\eps$. We have
$$ \sq(U) \leq \sum_i \sq(U_i)  \leq \sum_i C_d~(\diam U_i)^Q \leq C_d~(\hq(U) +\eps).$$
Since this is true for all $\eps>0$, we get that $ \sq(U) \leq C_d~\hq(U)$. 

Conversely for any subset $A$ of $G$ with $0< \diam A <+\infty$ we set $C(A)=\sq(A)/(\diam A)^Q$. We prove that $C(A)~\hq(U)\leq \sq(U)$. The required inequality $C_d~\hq(U)\leq \sq(U)$ will then follow from the definition of $C_d$. First we note that $C(A) \leq C(\overline A)$ and $C(A_{x,\lambda})
= C(A)$ where $A_{x,\lambda} =\delta_\lambda (x \cdot A)$ with $x\in G$ and $\lambda>0$. Hence, taking closure and up to a translation, one can assume with no
loss of generality that $A$ is closed and $0\in A$. We also assume that $C(A)>0$ otherwise there is nothing to prove. Let $\delta>0$ be fixed and set $\F=\{A_{x,\lambda}~;~x\in G~,~\lambda\leq \delta/ \diam A\}$. We have $x\in A_{x,\lambda}$ and $\diam A_{x,\lambda} = \lambda\diam A$ for any $x\in G$ and $\lambda>0$ hence $\F$ covers $U$ finely. Next we have $\diam\hat A_{x,\lambda} \leq 5\diam A_{x,\lambda}$ hence $\hat A_{x,\lambda}$ is contained in a ball with diameter $10\diam A_{x,\lambda}$. On the other hand it follows from Proposition~\ref{prop:sq} that $C(A) = \mu(A)/\mu(B)$ where $B$ is a ball with diameter $\diam A$ and $\mu$ any Haar measure. Then we get that $\mu(\hat A_{x,\lambda}) \leq 10^Q \mu (A_{x,\lambda}) / C(A_{x,\lambda}) = 10^Q \mu (A_{x,\lambda}) / C(A)$ for any Haar measure $\mu$. The family $\F$ thus satisfies the assumptions of Theorem~\ref{thm:covering}. Hence $\F$ is adequate for $U$ and one can find a countable disjointed subfamily $\tilde \F \subset \F$ such that $\cup_{F\in \tilde \F} F \subset U$ and $\hq(U\setminus \cup_{F\in \tilde \F} F) = 0$. Arguing as in the proof of Proposition~\ref{prop:sq} it follows that 
\begin{multline*}
 C(A)~\mathcal{H}^Q_{d,\delta} (U) = C(A)~\mathcal{H}^Q_{d,\delta} (\cup_{F\in \tilde \F} F) \leq C(A)~\sum_{F\in \tilde \F} (\diam F)^Q\\ = \sum_{F\in \tilde \F} C(F)~(\diam F)^Q  = \sum_{F\in \tilde \F} \sq(F) = \sq(\cup_{F\in \tilde \F} F) \leq \sq(U)
\end{multline*}
where $\mathcal{H}^Q_{d,\delta} (E)= \inf \{ \sum_i (\diam E_i)^Q~;~E\subset \cup_i E_i~,~\diam E_i\leq \delta\}$ for any $E\subset G$. Letting $\delta\downarrow 0$, we get that $C(A)~\hq(U) \leq \sq(U)$ as wanted.
\end{proof}

\section{Isodiametric problem}
\label{sec:isodiam}

This section is devoted to the isodiametric problem in Carnot groups equipped with homogeneous distances. We refer to the introduction (see Section~\ref{sec:intro}) for the statement of this problem. We shall in particular follow here the conventions and notations introduced in Section~\ref{sec:intro}. When not explicitly stated it is also always implicitly assumed throughout this section that the notations $G$, resp. $d$, resp. $Q$, denotes a Carnot group, resp. a homogeneous distance on $G$, resp. the homogeneous dimension of $G$.

\smallskip
We say that a set $E$ is isodiametric in $(G,d)$ if $E$ is a compact subset of $G$ which maximizes the ratio $\sq(A)/(\diam A)^Q$ among all $A\subset G$ with positive and finite diameter and hence satisfying 
\begin{equation*}
 \sq(E) = C_d~(\diam E)^Q~.
\end{equation*} 
See~\eqref{e:cd} for the definition of the isodiametric constant $C_d$. Note that it is not restrictive to ask isodiametric sets to be compact as the closure of any set which realizes the supremum in the right-hand side of \eqref{e:cd} is a compact set that still realizes the supremum.

\begin{thm} \label{thm:existence}
 Let $(G,d)$ be a Carnot group equipped with a homogeneous distance. Let $Q$ denote its homogeneous dimension. Then isodiametric sets in $(G,d)$ do exist.
\end{thm}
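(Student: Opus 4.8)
The plan is to use the direct method of the calculus of variations: take a maximizing sequence for the ratio defining $C_d$, confine it to a fixed compact set by exploiting the invariances of the functional, extract a convergent subsequence, and verify that the limit still maximizes. By the definition of $C_d$ in~\eqref{e:cd} there is a sequence $(B_j)_j$ with $0<\diam B_j<+\infty$ and $\sq(B_j)/(\diam B_j)^Q\to C_d$. Recall, as noted in the proof of Proposition~\ref{prop:sqhq}, that the ratio $C(A)=\sq(A)/(\diam A)^Q$ satisfies $C(A)\le C(\overline A)$ and $C(\delta_\lambda(x\cdot A))=C(A)$ for all $x\in G$ and $\lambda>0$. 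Replacing each $B_j$ by a suitable $\delta_{\lambda_j}(x_j\cdot \overline{B_j})$, I may therefore assume that each resulting set $A_j$ is compact, that $\diam A_j=1$, and that $0\in A_j$. Every point of $A_j$ is then at distance at most $1$ from $0$, so all the $A_j$ lie in the fixed compact set $K=\overline B(0,1)$ (closed bounded sets being compact in $(G,d)$), and $\sq(A_j)=C(A_j)\to C_d$.

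Next I would extract a limit. By the Blaschke selection theorem the space of nonempty compact subsets of the compact metric space $K$, equipped with the Hausdorff distance, is itself compact; hence a subsequence, still denoted $(A_j)_j$, converges in the Hausdorff distance to some nonempty compact set $E\subset K$. Since $0\in A_j$ for all $j$ we have $0\in E$, and since the diameter is continuous with respect to Hausdorff convergence we get $\diam E=\lim_j \diam A_j=1$, so that $0<\diam E<+\infty$.

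The heart of the matter is to show that $\sq(E)\ge \limsup_j \sq(A_j)=C_d$, which is an upper semicontinuity statement for $\sq$ along Hausdorff convergence. Here I would crucially use that, by Proposition~\ref{prop:sq}, $\sq$ coincides with the Haar measure $\nu$, so it is a Radon measure which is finite on bounded sets. For fixed $\eps>0$ the Hausdorff convergence gives $A_j\subset E_\eps:=\{x\in G:\ d(x,E)\le \eps\}$ for all large $j$, whence $\sq(A_j)\le \sq(E_\eps)$ and therefore $\limsup_j \sq(A_j)\le \sq(E_\eps)$. Since $E$ is closed, the sets $E_\eps$ decrease to $E$ as $\eps\downarrow 0$ and $\sq(E_1)<+\infty$, so continuity from above of $\sq$ yields $\sq(E_\eps)\to \sq(E)$ and hence $\limsup_j \sq(A_j)\le \sq(E)$. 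Combined with $\sq(E)\le C_d(\diam E)^Q=C_d$ coming from the definition of $C_d$, this gives $\sq(E)=C_d=C_d(\diam E)^Q$, so $E$ is isodiametric.

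I expect the upper semicontinuity in the third paragraph to be the only delicate point. It would in fact fail for a general lower-dimensional Hausdorff measure: a fine zig-zag of large length inside a thin tube around a segment converges to the segment yet carries far more measure. What rescues the argument here is precisely that $\sq$ is the $Q$-dimensional, i.e.\ top-dimensional, measure and hence a genuine Radon measure, for which monotonicity together with continuity from above controls the measure of the $\eps$-neighborhoods $E_\eps$. Everything else is a routine application of the invariances of $C$ and of Blaschke's selection theorem.
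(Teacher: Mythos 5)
Your proof is correct and follows essentially the same route as the paper: normalize a maximizing sequence to diameter one inside the closed unit ball, extract a Hausdorff-convergent subsequence, and conclude via upper semicontinuity of $\sq$ together with continuity of the diameter. The only cosmetic difference is that you justify the upper semicontinuity via closed $\eps$-neighborhoods and continuity from above, whereas the paper uses outer regularity of $\sq$ to find an open set $U\supset E$ with $\sq(U)\leq\sq(E)+\eps$; both arguments are valid and interchangeable here.
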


\begin{proof}Using dilations it is enough to find a compact set $E$ with $\diam E = 1$ and such that $\sq(E) = \sup \{\sq(A)~;~\diam A =1\}$. Let $(E_i)$ be a maximizing sequence of sets with diameter 1 such that 
\begin{equation*}
\lim_{i\rightarrow+\infty} \sq(E_i) = \sup \{\sq(A)~;~\diam A =1\}.
\end{equation*}
Taking closure if necessary and up to a suitable translation one can assume each $E_i$ to be compact and contained in the closed unit ball. Then one can extract a subsequence, still denoted by $(E_i)$, converging in the Hausdorff distance to some compact set $E$.

The measure $\sq$ being a regular measure it follows that $\sq$ is upper semi-continuous with respect to the convergence of compact sets in the Hausdorff distance. Indeed let $(A_i)$ be a sequence of compact sets converging to some compact set $A$ in the Hausdorff distance. By regularity of $\sq$, for any $\eps>0$, one can find an open set $U\supset A$ such that $\sq(U) \leq \sq(A) + \eps$. On the other hand by convergence in the Hausdorff distance we have $A_i\subset U$ for all $i$ large enough and hence $\sq(A_i)\leq \sq(U)$. It follows that 
\begin{equation*}
 \limsup _{i\rightarrow+\infty} \sq(A_i) \leq \sq(U) \leq \sq(A) + \eps~.
\end{equation*}
Since this holds for any $\eps>0$ we finally get 
\begin{equation*}
 \limsup _{i\rightarrow+\infty} \sq(A_i) \leq \sq(A)
\end{equation*}
as wanted.

Going back to the maximizing sequence $(E_i)$, it follows that 
\begin{equation*}
 \sq(E) \geq \lim_{i\rightarrow+\infty} \sq(E_i) = \sup \{\sq(A)~;~\diam A =1\}.
\end{equation*}
On the other hand, by convergence in the Hausdorff distance, we have $\diam E = 1$ and hence $E$ is isodiametric.
\end{proof}

Next we give a sufficient condition for a set to be isodiametric.

\begin{lem} \label{nonisodiam} Let $(G,d)$ be a Carnot group equipped with a homogeneous distance. Let $E\subset G$ be a compact set with $\diam E>0$. Assume that one can find $x\in \p E$ such that $d(x,y) <\diam E$ for all $y\in E$. Then $E$ is not isodiametric in $(G,d)$.
\end{lem}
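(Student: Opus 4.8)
The plan is to show that $E$ fails to maximize the ratio $\sq(A)/(\diam A)^Q$ by exhibiting an explicit strictly better competitor: I will enlarge $E$ by gluing on a small closed ball centred at the distinguished boundary point $x$, producing a set with the \emph{same} diameter but \emph{strictly larger} $\sq$-measure. Concretely, for $r>0$ to be fixed below, set $E' = E \cup \overline{B}(x,r)$. Since $E \subset E'$ one has $\sq(E') \geq \sq(E)$ and $\diam E' \geq \diam E$ automatically, so everything reduces to two points: that for a suitable choice of $r$ the diameter does not increase, and that the measure does strictly increase.

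For the diameter, I would first invoke compactness of $E$ together with the hypothesis $d(x,y) < \diam E$ for all $y\in E$: the continuous function $y\mapsto d(x,y)$ attains its maximum $M$ on the compact set $E$, and this maximum is strict, $M < \diam E$. I then estimate the pairwise distances in $E'$ by cases using the triangle inequality. For $p,q\in E$ they are $\leq \diam E$; for $p\in E$ and $q\in \overline{B}(x,r)$ one has $d(p,q)\leq d(p,x)+d(x,q)\leq M+r$; and for $p,q\in\overline{B}(x,r)$ one has $d(p,q)\leq 2r$, recalling that in this setting the diameter of a ball equals twice its radius. Choosing $r=\min\{\diam E - M,\ \tfrac12\diam E\}>0$ makes all three upper bounds at most $\diam E$, so $\diam E'=\diam E$.

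For the measure, I would exploit that $x\in\p E$ while $E$ is closed, so $G\setminus E$ is open and $x$ lies in its closure. Hence there is a point $z\in G\setminus E$ with $d(x,z)<r/2$, and by openness a radius $\eta\in(0,r/2)$ with $\overline{B}(z,\eta)\subset (G\setminus E)\cap\overline{B}(x,r)=\overline{B}(x,r)\setminus E$. By Proposition~\ref{prop:sq} this ball satisfies $\sq(\overline{B}(z,\eta))=(2\eta)^Q>0$. Since $E$ and $\overline{B}(x,r)\setminus E$ are disjoint Borel sets whose union is $E'$, additivity of the Haar measure $\sq$ yields $\sq(E')=\sq(E)+\sq(\overline{B}(x,r)\setminus E)\geq \sq(E)+(2\eta)^Q>\sq(E)$.

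Combining the two steps gives $\sq(E')/(\diam E')^Q > \sq(E)/(\diam E)^Q$, so by the definition of $C_d$ in~\eqref{e:cd} we get $\sq(E) < C_d\,(\diam E)^Q$, i.e.\ $E$ is not isodiametric. The argument is essentially elementary; the only delicate points are the two strictness claims, namely that compactness forces $M<\diam E$ strictly (leaving room $r>0$ to grow $E$) and that $x$ being a genuine boundary point forces $\overline{B}(x,r)\setminus E$ to contain an honest ball of positive measure. Neither constitutes a real obstacle, so the main work is simply organizing the case analysis that controls $\diam E'$.
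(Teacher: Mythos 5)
Your proof is correct and follows essentially the same route as the paper: enlarge $E$ by a small closed ball centred at the boundary point $x$, check via the triangle inequality that the diameter is preserved, and use $x\in\p E$ to find a small ball of positive $\sq$-measure inside the added piece. The only cosmetic difference is that the paper selects the radius via continuity of the $1$-Lipschitz map $z\mapsto\sup_{y\in E}d(z,y)$, whereas you compute an explicit admissible radius $r=\min\{\diam E-M,\tfrac12\diam E\}$; both choices do the same job.
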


\begin{proof}
 Let $E$ and $x\in \p E$ be as in the statement. Since $E$ is compact we have $\sup_{y\in E} d(x,y) <\diam E$. The map $z\mapsto \sup_{y\in E} d(z,y)$ being 1-Lipschitz and hence continuous, it follows that one can find a closed ball $B$ centered at $x$ with $0<\diam B< \diam E$ such that $\sup_{y\in E} d(z,y) < \diam E$ for all $z\in B$. We set $A=E\cup B$. We have $\diam A = \diam E$. Since $x\in\p E$ and $E$ is closed, we have that $A\setminus E$ has non empty interior and hence positive $\sq$-measure. Hence $\sq(A)>\sq(E)$ which proves that $E$ is not isodiametric.
\end{proof}

\begin{rmk}
 One can actually prove that if $E$ is isodiametric and $x\in \p E$ then any $y\in E$ such that $d(x,y) = \diam E$ belongs to the boundary of $E$. This follows essentially from the fact that the distance function from a given point is in our context an open map (recall also that isodiametric sets are assumed to be compact). We will however not use this fact in this paper.
\end{rmk}

From now on  we will consider non trivial Carnot groups, i.e., Carnot groups with a stratification of step $\geq 2$. When the stratification is trivial, i.e. of step 1, one recovers the abelian group $(\R^n,+)$. Then dilations are given by the multiplication by a scalar factor and homogeneous distances are distances induced by a norm in the classical usual sense. It is well-known that the sharp isodiametric inequality holds in $\Rn$ equipped with any  distance induced by a norm. On the contrary the situation becomes definitely different as soon as the stratification is non trivial as we shall see below.

\smallskip
First we prove that for any non trivial Carnot group, there exists a homogeneous distance, namely the $d_\infty$-distances, for which the sharp isodiametric inequality does not hold.

\begin{thm} \label{thm:dinfty}
Let $(G,d_\infty)$ be a non trivial Carnot group equipped with a homogeneous $d_\infty$-distance. Then closed balls in $(G,d_\infty)$ are not isodiametric and the sharp isodiametric inequality \eqref{e:ii} does not hold in $(G,d_\infty)$.
\end{thm}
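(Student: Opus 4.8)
The plan is to deduce everything from Lemma~\ref{nonisodiam}: it suffices to exhibit, for the closed unit ball $B=\{z:\|z\|_\infty\leq 1\}$ (which has $\diam B=2$), one boundary point $x\in\p B$ with $d_\infty(x,y)<2$ for every $y\in B$. Since $d_\infty$ is left invariant and $\sq$ is $Q$-homogeneous under dilations, the ratio $\sq(A)/(\diam A)^Q$ is unchanged under translations and dilations, so proving that this single ball is not isodiametric shows that no closed ball is isodiametric. The natural candidate for the bad boundary point is a point lying purely in the direction of the top layer $V_k$ of the stratification: I would set $x=\exp(Y_k)$ with $0\neq Y_k\in V_k$ normalized so that $c_k\|Y_k\|^{1/k}=1$, which gives $\|x\|_\infty=1$ and hence $x\in\p B$.

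First I would record that $V_k$ is central. Since $[V_i,V_j]\subseteq V_{i+j}$ (an easy induction from $[V_1,V_j]=V_{j+1}$ via the Jacobi identity) and $V_{k+1}=\{0\}$, we get $[V_i,V_k]\subseteq V_{i+k}=\{0\}$ for all $i\geq 1$, so $Y_k$ commutes with everything and $\exp(Y_k)$ is central in $G$. Consequently, for any $y=\exp(\sum_j Z_j)$ with $Z_j\in V_j$, the Campbell--Hausdorff formula collapses to $x^{-1}\cdot y=\exp\big(\sum_{j<k}Z_j+(Z_k-Y_k)\big)$. If $y\in B$ then $c_j\|Z_j\|^{1/j}\leq 1$ for every $j$; in particular $\|Z_k\|\leq c_k^{-k}=\|Y_k\|$, so $\|Z_k-Y_k\|\leq 2c_k^{-k}$ and hence $c_k\|Z_k-Y_k\|^{1/k}\leq 2^{1/k}$, while $c_j\|Z_j\|^{1/j}\leq 1$ for $j<k$. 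Taking the maximum defining $\|\cdot\|_\infty$ yields
\begin{equation*}
 d_\infty(x,y)=\|x^{-1}\cdot y\|_\infty\leq\max(1,2^{1/k})=2^{1/k}.
\end{equation*}
Since the group is non trivial, $k\geq 2$, so $2^{1/k}\leq\sqrt{2}<2=\diam B$, and Lemma~\ref{nonisodiam} applies: $B$ is not isodiametric.

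Finally I would convert ``no ball is isodiametric'' into the failure of \eqref{e:ii}. By Proposition~\ref{prop:sq} a ball $B$ satisfies $\sq(B)=(\diam B)^Q$, so $B$ is isodiametric if and only if $(\diam B)^Q=C_d(\diam B)^Q$, that is if and only if $C_d=1$, which is exactly the statement that \eqref{e:ii} holds (cf.\ also Theorem~\ref{thm:existence}, which guarantees the supremum in \eqref{e:cd} is attained). Having shown that balls are not isodiametric, we conclude $C_d>1$ and hence \eqref{e:ii} fails in $(G,d_\infty)$.

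The only delicate points here are conceptual rather than computational: recognizing that the right boundary point is a pure top-layer element, and exploiting the centrality of $V_k$ so that the distance from $x$ to the whole ball degenerates like $2^{1/k}$ instead of $2$. Once centrality is used to linearize the group product, the remaining estimates are immediate from the triangle inequality for $\|\cdot\|$ and the definition of $\|\cdot\|_\infty$; the main thing to verify carefully is that $x$ genuinely lies on $\p B$ and that the reduction to a single ball is legitimate, both of which follow from the homogeneity properties already established.
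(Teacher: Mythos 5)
Your proposal is correct and follows essentially the same route as the paper: the same boundary point $\exp(Y_k)$ with $Y_k$ in the top (central) layer, the same collapse of the Campbell--Hausdorff product, and the same $2^{1/k}<2$ bound fed into Lemma~\ref{nonisodiam}. The only cosmetic difference is that you bound $\|Z_k-Y_k\|$ by the triangle inequality where the paper expands the square of the norm, which yields the identical estimate.
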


\begin{proof}
Let the $d_\infty$-distance be defined as in ~\ref{ssubsec:dinfty}. Using left translations and dilations we only need to prove that the closed unit ball in $(G,d_\infty)$ is not isodiametric. Let $B$ denote the closed unit ball in $(G,d_\infty)$. Let $X\in V_k$ be such that $c_k\|X\|^{1/k}=1$ and set $x=\exp X$. Then $x\in \p B$. Let $y\in B$, $y=\exp (Y_1+\dots+Y_k)$, $Y_j\in V_j$. We have 
\begin{equation*}
 x^{-1} \cdot y = \exp(-X)\cdot \exp (\sum_{j=1}^k Y_j) = \exp(\sum_{j=1}^{k-1} Y_j + Y_k-X).
\end{equation*}
Since 
\begin{equation*}
c_k^{2k}\|Y_k-X\|^2 = c_k^{2k}\|Y_k\|^2+c_k^{2k}\|X\|^2-2c_k^{2k}\scal{Y_k}{X}\leq 2+2c_k^k\|Y_k\|c_k^k\|X\|\leq 4,
\end{equation*}
we get $d_\infty(x,y) \leq \max(1,2^{1/k})<2=\diam B$ and the conclusion follows from Lemma~\ref{nonisodiam}.
\end{proof}

Similarly the sharp isodiametric inequality~\eqref{e:ii} does not hold in $H$-type groups equipped with the gauge distance. See~\ref{ssubsec:typeH} for the definition of $H$-type groups and gauge distance.

\begin{thm} \label{thm:dg}
Let $(G,d_g)$ be a $H$-type group equipped with the gauge distance. Then closed balls in $(G,d_g)$ are not isodiametric and the sharp isodiametric inequality \eqref{e:ii} does not hold in $(G,d_g)$.
\end{thm}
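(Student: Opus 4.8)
The plan is to argue exactly as in the proof of Theorem~\ref{thm:dinfty}, replacing the role of the top layer $V_k$ by the center $\Z$ of the step-two stratification $\G=\V\oplus\Z$. By left invariance and homogeneity of $d_g$ it suffices to show that the closed unit ball $B$ in $(G,d_g)$, which has $\diam B=2$, is not isodiametric; for this I would produce a boundary point $x\in\p B$ with $d_g(x,y)<2$ for all $y\in B$ and then invoke Lemma~\ref{nonisodiam}.

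Concretely, I would fix $Z_0\in\Z$ with $\|Z_0\|=1/4$ and set $x=\exp Z_0$, so that $\|x\|_g=(16\|Z_0\|^2)^{1/4}=1$ and hence $x\in\p B$. For an arbitrary $y=\exp(X+Z)\in B$ (with $X\in\V$, $Z\in\Z$ and $\|X\|^4+16\|Z\|^2\le1$) I would compute $x^{-1}\cdot y$ through the Campbell--Hausdorff formula. The decisive simplification is that the group has step two, so the formula stops at the bracket term, and $Z_0\in\Z$ together with $[\V,\Z]=[\Z,\Z]=\{0\}$ forces $[-Z_0,X+Z]=0$. Consequently $x^{-1}\cdot y=\exp\bigl(X+(Z-Z_0)\bigr)$, a clean expression with horizontal part $X\in\V$ and vertical part $Z-Z_0\in\Z$.

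From here the estimate is routine. Using the explicit gauge norm and the triangle inequality $\|Z-Z_0\|\le\|Z\|+1/4$ one finds
\[
\|x^{-1}\cdot y\|_g^4=\|X\|^4+16\|Z-Z_0\|^2\le\bigl(\|X\|^4+16\|Z\|^2\bigr)+8\|Z\|+1\le2+8\|Z\|,
\]
and since $16\|Z\|^2\le1$ forces $\|Z\|\le1/4$, the right-hand side is at most $4$. Hence $d_g(x,y)\le\sqrt2<2=\diam B$ for every $y\in B$, and Lemma~\ref{nonisodiam} yields that $B$ is not isodiametric.

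The non-validity of \eqref{e:ii} then follows at once: since $\sq(B)=(\diam B)^Q$ by Proposition~\ref{prop:sq}, a closed ball that fails to be isodiametric forces $C_{d_g}>1$, which is exactly the failure of the sharp isodiametric inequality. I do not expect a genuine obstacle here; the one point that really uses the structure at hand, and which must be checked with care, is the vanishing of the Campbell--Hausdorff bracket term, which is precisely what lets the $H$-type, step-two geometry cooperate and produce the explicit bound $\sqrt2$ comfortably below the diameter $2$.
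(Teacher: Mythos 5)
Your proposal is correct and follows essentially the same route as the paper: the same choice of boundary point $x=\exp(Z_0)$ with $\|Z_0\|=1/4$, the same Campbell--Hausdorff computation giving $x^{-1}\cdot y=\exp(X+Z-Z_0)$, and the same bound $d_g(x,y)\le\sqrt2<2$ feeding into Lemma~\ref{nonisodiam}. The only cosmetic difference is that you estimate $\|Z-Z_0\|$ by the triangle inequality where the paper expands the square and uses Cauchy--Schwarz; both yield the identical bound.
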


\begin{proof} Arguing as in the proof of Theorem~\ref{thm:dinfty} we prove that the closed unit ball in $(G,d_g)$ - denoted by $B$ - is not isodiametric. Following the notations of~\ref{ssubsec:typeH} we choose some $x=\exp(Z_x)$ with $Z_x\in\Z$ such that $\|Z_x\| = 1/4$. Then if $y = \exp(X+Z)\in B$ with $X\in\V$ and $Z\in\Z$, we have $\exp(Z_x)^{-1} \cdot \exp(X+Z) = \exp(X + Z - Z_x)$ hence
\begin{equation*}
\begin{split}
d_g(x,y)^4 &= \|X\|^4 + 16\,\|Z-Z_x\|^2\\
&= \|X\|^4 +16 \, \|Z\|^2 +16 \, \|Z_x\|^2 - 32\,\scal{Z_x}{Z} \leq 2\,(1+16\,\|Z\|\,\|Z_x\|) \leq 4
\end{split}
\end{equation*}
since $y\in B$ and hence $\|X\|^4 +16 \, \|Z\|^2 \leq 1$ and $\|Z\|\leq 1/4$. It follows that $d_g(x,y)\leq \sqrt 2 <2=\diam B$ and we get the conclusion  from Lemma~\ref{nonisodiam}.
\end{proof}

Finally we investigate the case of the Carnot-Carath\'eodory distances. See~\ref{ssubsec:dc} for the definition of Carnot-Carath\'eodory distances.

\begin{thm} \label{thm:dc}
 Let $(G,d_c)$ be a non trivial Carnot group equipped with a Carnot-Carath\'eodory distance. Assume that $(G,d_c)$ satisfies assumption \eqref{e:cut}.
 Then closed balls in $(G,d_c)$ are not isodiametric and the sharp isodiametric inequality \eqref{e:ii} does not hold in $(G,d_c)$.
\end{thm}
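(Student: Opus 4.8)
The plan is to deduce the theorem from Lemma~\ref{nonisodiam} applied to a single, well-chosen closed ball. First I would record the following reduction: it is enough to prove that $C_{d_c}>1$. Indeed, by Proposition~\ref{prop:sq} every ball $B$ satisfies $\sq(B)=(\diam B)^Q$; hence if $C_{d_c}>1$ then $\sq(B)=(\diam B)^Q<C_{d_c}(\diam B)^Q$, so $B$ does not realize the supremum in \eqref{e:cd} and is therefore not isodiametric, while \eqref{e:ii} — which amounts to $C_{d_c}=1$ since one always has $C_{d_c}\geq 1$ — fails. Moreover, to obtain $C_{d_c}>1$ it suffices to exhibit one closed ball that is not isodiametric: the construction in the proof of Lemma~\ref{nonisodiam} produces a set $A$ with $\diam A$ equal to the ball's diameter but with strictly larger $\sq$-measure, so that $\sq(A)/(\diam A)^Q>1$ and hence $C_{d_c}>1$.

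Next I would use assumption~\eqref{e:cut} to position the ball. By \eqref{e:cut} together with the homogeneity remark recorded just after its statement, there exist a point $x$ and a point $o\in\cut(x)$ with $d_c(x,o)=1$, realized by a length minimizing curve $\gamma$ from $x$ to $o$ which is no longer length minimizing once it is continued past $o$. I would then take $E=\overline{B}(o,1)$, the closed ball of radius $1$ centred at $o$, so that $\diam E=2$ (twice the radius) and $x\in\p E$, since $\p E=\{z:d_c(o,z)=1\}$. To apply Lemma~\ref{nonisodiam} to $E$ with this boundary point $x$, I must check that $d_c(x,y)<2$ for every $y\in E$.

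The heart of the argument, and the step I expect to be the main obstacle, is precisely this verification. Suppose for contradiction that $d_c(x,y)=2$ for some $y\in E$. Since $d_c(x,y)\leq d_c(x,o)+d_c(o,y)\leq 1+1=2$, equality forces $d_c(o,y)=1$ and forces $o$ to lie on a length minimizing curve from $x$ to $y$; let $\sigma$ denote the portion of that curve joining $o$ to $y$, a length minimizing curve of length $1=d_c(o,y)$. The naive attempt to contradict $o\in\cut(x)$ directly does not work, because a priori the minimizing curve from $x$ to $y$ need not coincide with $\gamma$ on the segment from $x$ to $o$, and there may be several distinct minimizing curves from $x$ to $o$. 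The key observation that overcomes this is a grafting argument: concatenating $\gamma$ (from $x$ to $o$, length $1$) with $\sigma$ (from $o$ to $y$, length $1$) yields a curve from $x$ to $y$ of length exactly $2=d_c(x,y)$, hence a length minimizing curve whose restriction to the first segment is exactly $\gamma$. This is a rectifiable extension of $\gamma$ beyond $o$ that is length minimizing, contradicting the defining property of $\gamma$. Therefore $d_c(x,y)<2$ for all $y\in E$, the supremum being attained since $E$ is compact.

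With this verification in hand, Lemma~\ref{nonisodiam} shows that $E$ is not isodiametric, whence $C_{d_c}>1$ by the reduction of the first paragraph, and both assertions of the theorem follow. The only remaining points are routine: that the diameter of a ball equals twice its radius (already noted for homogeneous distances) and the identification $\p E=\{z:d_c(o,z)=1\}$, both of which hold in the present geodesic, homogeneous setting.
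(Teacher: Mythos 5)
Your proposal is correct and follows essentially the same route as the paper: centre the closed unit ball at a cut point $o\in\cut(x)$ with $d_c(x,o)=1$, take the starting point $x$ of the minimizing curve as the boundary point, and show $d_c(x,y)<2$ for all $y$ in the ball by concatenating $\gamma$ with a minimizing curve from $o$ to $y$ and invoking the defining property of the cut point, then conclude via Lemma~\ref{nonisodiam}. The only (cosmetic) difference is that you run the key step as a proof by contradiction, whereas the paper argues directly that the concatenation is not length minimizing and hence $d_c(x,y)<1+d_c(o,y)\leq 2$.
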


\begin{proof}
 By assumption \eqref{e:cut} one can find $x\in\cut(0)$ with $d_c(0,x)=1$ and $\gamma:[0,1]\rightarrow G$ a length minimizing curve from 0 to $x$ which is no more length minimizing after reaching $x$. Let $B$ denote the closed ball with center $x$ and radius one. We prove that $B$, and hence any closed ball, is not isodiametric in $(G,d_c)$. Let $y\in B$ and $c:[0,1+d_c(x,y)]\rightarrow G$ be such that $c_{|[0,1]}=\gamma$ and $c_{|[1,1+d_c(x,y)]}$ is a unit speed length minimizing curve from $x$ to $y$. Then $c$ is not length minimizing on $[0,1+d_c(x,y)]$ by choice of $\gamma$ hence $d_c(0,y)<l_{d_c}(c) = d_c(0,x) + d_c(x,y)\leq 2 = \diam B$ and the conclusion follows from Lemma \ref{nonisodiam}.
\end{proof}

\section{Some consequences and related problems}
\label{sec:cons}

\subsection{Non-rectifiabilty}
\label{subsec:nonrect}

We recover here the fact that a non trivial Carnot group equipped with a homogeneous distance and with homogeneous dimension $Q$ is purely $Q$-unrectifiable. This is already well-known, see e.g.~\cite{ambkirc},~\cite{magnani} for more and further results about rectifiability in Carnot groups. The point is that we give here a different proof using only metric arguments. Recall that a metric space $(M,d)$ is said to be purely $n$-unrectifiable if one has $\hn(f(A)) = 0$ for every Lipschitz map $f:A\subset\R^n \rightarrow (M,d)$ where $\R^n$ is equipped with the Euclidean distance.

\begin{thm} \label{thm:nonrect}
 Let $(G,d)$ be a non trivial Carnot group equipped with a homogeneous distance. Let $Q$ denote its homogeneous dimension. Then $(G,d)$ is purely $Q$-unrectifiable.
\end{thm}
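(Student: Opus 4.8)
The plan is to deduce pure $Q$-unrectifiability from the strict inequality $C_d>1$ by comparing, at almost every point of a hypothetical rectifiable piece, the density seen by the ambient group with the density forced by the (normed) infinitesimal structure of a rectifiable set. First I would reduce to a convenient distance. Being purely $Q$-unrectifiable is a bilipschitz invariant: a bilipschitz homeomorphism sends Lipschitz images to Lipschitz images and $\hq$-null sets to $\hq$-null sets. Since any two homogeneous distances on $G$ are bilipschitz equivalent (Subsection~\ref{subsec:homodist}), it suffices to treat the single distance $d_\infty$, for which Theorem~\ref{thm:dinfty} gives $C_{d_\infty}>1$. So I would argue by contradiction: if $(G,d_\infty)$ were not purely $Q$-unrectifiable there would be a Lipschitz map $f\colon A\subset\R^Q\to(G,d_\infty)$ with $\hq(f(A))>0$, and after restricting $A$ I may assume $R:=f(A)$ is compact with $0<\hq(R)<+\infty$.

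Next I would compute the ambient density of $R$. The measure $\sq$ is a Haar measure, hence Radon, and $(G,d_\infty)$ is doubling since $\sq(B(x,2r))=2^Q\,\sq(B(x,r))$ by Proposition~\ref{prop:sq}. The Lebesgue--Besicovitch differentiation theorem therefore applies to $\sq$, giving $\sq(R\cap B(x,r))/\sq(B(x,r))\to 1$ as $r\downarrow0$ for $\sq$-a.e., equivalently $\hq$-a.e., $x\in R$. Using $\sq(B(x,r))=(2r)^Q$ together with $\sq=C_{d_\infty}\hq$ (Proposition~\ref{prop:sqhq}), this yields
\begin{equation*}
\lim_{r\downarrow0}\frac{\hq(R\cap B(x,r))}{(2r)^Q}=C_{d_\infty}^{-1}<1\qquad\text{for }\hq\text{-a.e. }x\in R.
\end{equation*}

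The heart of the argument, and the step I expect to be the main obstacle, is to show that rectifiability forces this very density to equal $1$. For this I would invoke the metric differentiability of Lipschitz maps and the area formula in metric spaces: at $\mathcal L^Q$-a.e.\ $a\in A$ the metric differential $\operatorname{md}f_a$ is a seminorm on $\R^Q$, and by the area formula the points where it is degenerate contribute nothing to $\hq(f(A))$; since $\hq(R)>0$, the differential is a genuine norm $s=\operatorname{md}f_a$ at almost every point of a subset of positive measure, whose image I again call $R$. At such a point $x=f(a)$ the rescalings of $R$ converge to the normed space $(\R^Q,s)$ carrying its Hausdorff measure, so that
\begin{equation*}
\lim_{r\downarrow0}\frac{\hq(R\cap B(x,r))}{(2r)^Q}=\lim_{r\downarrow0}\frac{\mathcal H^Q_{s}(B_s(0,r))}{(2r)^Q}.
\end{equation*}
Because the sharp isodiametric inequality holds in any normed space, $\mathcal H^Q_{s}(B_s(0,r))=(2r)^Q$, so this limit equals $1$.

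Comparing the two computations at $\hq$-a.e.\ point of $R$ gives $C_{d_\infty}^{-1}=1$, contradicting $C_{d_\infty}>1$; hence $\hq(R)=0$, so $(G,d_\infty)$, and therefore $(G,d)$ for every homogeneous distance $d$, is purely $Q$-unrectifiable. The only genuinely delicate input is the normed-tangent density identity in the third paragraph: it requires the metric differentiability and area-formula machinery for Lipschitz maps into metric spaces, plus the validity of the sharp isodiametric inequality in finite-dimensional normed spaces, and it is precisely here that the metric (rather than algebraic) nature of the proof is exploited. Everything else is a standard differentiation argument for the Radon measure $\sq$ combined with the bilipschitz reduction.
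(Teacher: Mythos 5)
Your argument is correct and is essentially the paper's proof: reduce by bilipschitz invariance to $d_\infty$, note that the ambient density of any set at any point is at most $\hq(B(x,r))/(2r)^Q=C_{d_\infty}^{-1}<1$ by Propositions~\ref{prop:sq} and~\ref{prop:sqhq} and Theorem~\ref{thm:dinfty}, and contradict the fact that a Lipschitz image of positive, finite $\hq$-measure has density $1$ at $\hq$-a.e.\ point. The only difference is that the paper simply cites Kirchheim~\cite{kirc} for that density-one statement (and uses the trivial inclusion $f(A)\cap B\subset B$ rather than the Lebesgue differentiation theorem), whereas you sketch Kirchheim's proof via metric differentials and the isodiametric inequality in normed spaces.
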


\begin{proof}
 The property of being purely $Q$-unrectifiable is invariant under a change of bilipschitz equivalent distance. Since all homogeneous distances are bilipschitz equivalent, one can assume that $G$ is equipped with a $d_\infty$-distance defined as in~\ref{ssubsec:dinfty}. Assume by contradiction that one can find a Lipschitz map $f:A\subset \R^Q \rightarrow (G,d_\infty)$ such that $\hqi(f(A))>0$. One can also assume with no loss of generality that $A$ is bounded and hence $\hqi(f(A))<+\infty$. Then one has
\begin{equation*}
 \lim_{r\downarrow 0} \dfrac{\hqi(f(A) \cap B(x,r))}{(2r)^Q} =1
\end{equation*}
for $\hqi$-a.e. $x\in f(A)$ (see~\cite{kirc}). On the other hand it follows from Proposition~\ref{prop:sq}, Proposition~\ref{prop:sqhq} and Theorem~\ref{thm:dinfty} that 
\begin{equation*}
 \dfrac{\hqi(f(A) \cap B(x,r))}{(2r)^Q} \leq \dfrac{\hqi(B(x,r))}{(2r)^Q} = \dfrac{\hqi(B(x,r))}{\sqi(B(x,r))} = C_{d_\infty}^{-1} <1
\end{equation*}
which gives a contradiction.
\end{proof}

\subsection{Besicovitch 1/2-problem}
\label{subsec:bes}

We investigate now some connection between the isodiametric problem and densities. Following~\cite{preisstiser} given a metric space $(M,d)$ we denote by $\sigma_n(M,d)$ the smallest number such that every subset $A\subset M$ of finite $\hn$-measure having 
\begin{equation*}
\underline{D}_n(A,x) > \sigma_n(M,d)
\end{equation*}
at $\hn$-a.e. $x\in A$ is $n$-rectifiable, where the lower $n$-density of $A$ at $x$ is given by
\begin{equation*}
\underline{D}_n(A,x)= \liminf_{r\downarrow 0}\dfrac{\hn(A\cap B(x,r))}{(2r)^n}.
\end{equation*}
We call this number the density constant. Recall that $A$ is said $n$-rectifiable if $\hn$-almost all of $A$ can be covered by countably many Lipschitzian images of subsets of the Euclidean space $\R^n$. 

\smallskip
It follows from famous results of A.S. Besicovitch (see~\cite{besi1}, \cite{besi2}) that one always has $\sigma_n(M,d) \leq 1$. These results also imply that the fact that $\sigma_n(M,d) = 1$ does not give any positive information about rectifiability. See~\cite{preisstiser} for a more detail account about known results on density constants.

\smallskip
We first note and explicitly prove that $\sigma_Q(G,d)<1$ whenever $d$ is a homogeneous distance on a Carnot group $G$ for which the sharp isodiametric inequality~\eqref{e:ii} does not hold and $Q$ denotes the homogeneous dimension of $G$. More precisely

\begin{thm} \label{thm:densconst}
 Let $(G,d)$ be a non trivial Carnot group equipped with a homogeneous distance. Let $Q$ denote its homogeneous dimension. Then one has 
\begin{equation*}
 \sigma_Q(G,d)= C_d^{-1}
\end{equation*}
and hence $\sigma_Q(G,d)<1$ if and only if the sharp isodiametric inequality~\eqref{e:ii} does not hold in $(G,d)$.
\end{thm}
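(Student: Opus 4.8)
The plan is to establish the identity by proving the two inequalities $\sigma_Q(G,d)\le C_d^{-1}$ and $\sigma_Q(G,d)\ge C_d^{-1}$ separately. The single computational input both directions rely on is the exact value of the $\hq$-measure of a ball: combining Proposition~\ref{prop:sq} with Proposition~\ref{prop:sqhq} and the fact that the diameter of a ball is twice its radius, one gets $\hq(B(x,r)) = C_d^{-1}\,\sq(B(x,r)) = C_d^{-1}\,(2r)^Q$ for every ball. Once this is in hand, the argument is essentially a careful reading of the definition of the density constant against the purely metric facts already proved.

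For the upper bound I would argue that $C_d^{-1}$ is an admissible threshold in the definition of $\sigma_Q(G,d)$. Indeed, for any $A\subset G$ and any $x$ one has $\hq(A\cap B(x,r))\le \hq(B(x,r)) = C_d^{-1}(2r)^Q$, whence $\underline{D}_Q(A,x)\le C_d^{-1}$ for every $x$. Consequently, if $\hq(A)>0$ the strict inequality $\underline{D}_Q(A,x) > C_d^{-1}$ holds at no point of $A$, so it certainly cannot hold $\hq$-a.e.; the only sets meeting the hypothesis with threshold $C_d^{-1}$ are thus the $\hq$-null sets, which are (vacuously) $Q$-rectifiable. Hence $\sigma_Q(G,d)\le C_d^{-1}$.

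For the lower bound I would show that no $s<C_d^{-1}$ is admissible, by exhibiting a non-rectifiable set of density $C_d^{-1}$. Taking $A$ to be an open ball, which has positive and finite $\hq$-measure, every $x\in A$ satisfies $B(x,r)\subset A$ for all small $r$, so $\hq(A\cap B(x,r)) = \hq(B(x,r)) = C_d^{-1}(2r)^Q$ and therefore $\underline{D}_Q(A,x) = C_d^{-1} > s$ at every point of $A$. On the other hand, by pure $Q$-unrectifiability (Theorem~\ref{thm:nonrect}) any set of positive $\hq$-measure, and in particular $A$, fails to be $Q$-rectifiable. Thus the implication defining an admissible threshold fails for $s$, giving $\sigma_Q(G,d)\ge s$; letting $s\uparrow C_d^{-1}$ yields $\sigma_Q(G,d)\ge C_d^{-1}$, and combining the two bounds completes the proof.

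I do not expect a genuine analytic obstacle here: the two substantive ingredients, namely the exact measure of balls and pure unrectifiability, are already established, so the remaining difficulty is purely one of bookkeeping. The most delicate point, and the one I would be careful to state precisely, is the vacuous-implication step in the upper bound: one must observe that the hypothesis ``$\underline{D}_Q(A,x)>C_d^{-1}$ at $\hq$-a.e. $x\in A$'' can be satisfied only by $\hq$-null sets, and that such sets count as $Q$-rectifiable, so that $C_d^{-1}$ really does qualify as a threshold in the definition.
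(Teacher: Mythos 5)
Your proof is correct and follows essentially the same route as the paper: both directions rest on the identity $\hq(B(x,r)) = C_d^{-1}(2r)^Q$ from Propositions~\ref{prop:sq} and~\ref{prop:sqhq}, with the upper bound coming from the universal density bound $\underline{D}_Q(A,x)\le C_d^{-1}$ and the lower bound from the exact density of an open ball together with pure $Q$-unrectifiability (Theorem~\ref{thm:nonrect}). Your treatment is in fact slightly more careful than the paper's on two minor points: you spell out the vacuous-implication step showing $C_d^{-1}$ is admissible, and you use an open ball rather than an arbitrary open set, which respects the finite-measure requirement in the definition of $\sigma_Q$.
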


\begin{proof} Let $U\subset G$ be open. It follows from Proposition~\ref{prop:sq} and Proposition~\ref{prop:sqhq} that
\begin{equation*}
 \underline{D}_n(U,x) = \liminf_{r\downarrow 0}\dfrac{\hq(B(x,r))}{(2r)^Q} = \liminf_{r\downarrow 0}\dfrac{\hq(B(x,r))}{\sq(B(x,r))} = C_d^{-1}
\end{equation*}
for all $x\in U$. Since $(G,d)$ is purely $Q$-unrectifiable (see e.g. Theorem~\ref{thm:nonrect}) it follows that $\sigma_Q(G,d)\geq C_d^{-1}$. On the other hand, one obviously has 
 \begin{equation*}
\underline{D}_n(A,x)\leq \underline{D}_n(G,x) = C_d^{-1}
\end{equation*}
for all $A\subset G$ and $x\in G$. Hence $\sigma_Q(G,d)\leq C_d^{-1}$ which concludes the proof. 
\end{proof}

A.S. Besicovitch conjectured in~\cite{besi2} that $\sigma_1(\R^2)\leq 1/2$. More generally the question to know whether $\sigma_n(M,d) \leq 1/2$ for any separable metric space $(M,d)$ is known as the generalized Besicovitch 1/2-problem. We investigate below the case of the Heisenberg groups equipped with a $d_\infty$-distance as well as with a Carnot-Carath\'eodory distance exhibiting cases where the conjecture fails. 

\smallskip
We first describe the model we consider for the Heisenberg group $\Hn$. We identify it with $\Rnn\times\R$ and denote points in $\Hn$ by $[z,t]$, $z=(x_1,\dots,x_{2n})\in\Rnn$, $t\in\R$. The group law is 
\begin{equation*}
[z,t]\cdot [z',t'] = [z + z', t+t'+ 2 \sum_{j=1}^n (x_{n+j} x'_j - x_j x'_{n+j})]~.
\end{equation*}
The stratification is given by 
\begin{equation*}
 V_1 = \spa \{X_j, Y_j; \, j=1,\dots,n\}~,~V_2 = \spa \{\partial_t\}~,
\end{equation*}
where $X_j = \partial_{x_j} + 2 x_{n+j} \partial_t$ and $Y_j = \partial_{x_{n+j}} - 2 x_j \partial_t$. The dilations are $\delta_\lambda([z,t]) = [\lambda z, \lambda^2 t]$. The homogeneous dimension of $\Hn$ is $2n+2$. The $(2n+1)$-dimensional Lebesgue measure $\leb$ on $\Hn \approx \Rnn\times\R$ is a Haar measure of the group.

\smallskip
We consider first the $d_\infty$-distance defined with respect to the homogeneous norm $\|[z,t]\|_\infty = \max (\|z\|,|t|^{1/2})$ where here $\|\cdot\|$ denotes the Euclidean norm in $\Rnn$, 
\begin{equation*}
 d_\infty([z,t],[z',t'])= \|[z,t]^{-1}\cdot [z',t']\|_\infty~.
\end{equation*}

\begin{thm} \label{prop:1/2bes-dinfty}
 The isodiametric constant $C_{d_\infty}$ in $(\Hn,d_\infty)$ satisfies $C_{d_\infty}<2 $ and hence $\sigma_{2n+2}(\Hn,d_\infty)>1/2$.
\end{thm}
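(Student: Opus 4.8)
The plan is to prove the strict bound $C_{d_\infty}<2$; the density statement then follows immediately from Theorem~\ref{thm:densconst}, since $\sigma_{2n+2}(\Hn,d_\infty)=C_{d_\infty}^{-1}>1/2$. So all the work goes into the isodiametric constant.

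First I would express the reference measure in coordinates. As $\leb$ is a Haar measure on $\Hn\approx\Rnn\times\R$ and $\sq(B)=(\diam B)^Q$ by Proposition~\ref{prop:sq}, it suffices to compare the two on the closed unit ball $B_\infty=\{[z,t]:\|z\|\le 1,\ |t|\le 1\}$. This ball has $d_\infty$-diameter $2$ and Lebesgue measure $2\alpha_{2n}$, so $\sq=\tfrac{2^{2n+1}}{\alpha_{2n}}\leb$. Bounding $C_{d_\infty}$ thus reduces to bounding the ratio $\leb(A)/(\diam A)^{2n+2}$.

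Next I would obtain the non-strict bound $C_{d_\infty}\le 2$ by slicing. Writing the homogeneous distance out, two points $[z,s],[z',s']$ satisfy $d_\infty\le D$ exactly when $\|z-z'\|\le D$ and $|s'-s+2\omega(z,z')|\le D^2$, where $\omega$ is the standard symplectic form on $\Rnn$. Hence, for a set $A$ of diameter $D$, the Euclidean projection $\pi(A)\subset\Rnn$ has Euclidean diameter $\le D$, while each vertical fiber $A_z$ satisfies $\lun(A_z)\le\diam A_z\le D^2$. Combining Fubini with the classical Euclidean isodiametric inequality $\lnn(\pi(A))\le 2^{-2n}\alpha_{2n}D^{2n}$ gives $\leb(A)\le D^2\,\lnn(\pi(A))\le 2^{-2n}\alpha_{2n}D^{2n+2}$, i.e.\ $C_{d_\infty}\le 2$.

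The heart of the matter, and the main obstacle, is upgrading this to a \emph{strict} inequality. Here I would argue by contradiction using the existence of an isodiametric set $E$ (Theorem~\ref{thm:existence}), normalized to $\diam E=1$. If $C_{d_\infty}=2$, then every inequality above must be an equality: saturating the Euclidean isodiametric inequality forces $\pi(E)$ to be (up to a null set, and after a left translation centering it) the Euclidean ball $\bar B(0,1/2)$, while $\lun(E_z)=1$ together with $\diam E_z\le 1$ forces each fiber to be a unit interval $E_z=[g(z)-\tfrac12,g(z)+\tfrac12]$ for a.e.\ $z$. Feeding this back into the constraint $|s'-s+2\omega(z,z')|\le 1$ for all $s\in E_z$, $s'\in E_{z'}$ forces the centering function to satisfy $g(z)-g(z')=2\omega(z,z')$ for a.e.\ pair $z,z'\in\bar B(0,1/2)$. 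I would then close the argument using the nondegeneracy of $\omega$: fixing a generic $z_0$ shows that $g$ agrees a.e.\ with the affine map $z\mapsto g(z_0)+2\omega(z,z_0)$, and substituting back reduces the relation to $\omega(z,z_0-z')=\omega(z',z_0)$ for a.e.\ $z$. The right-hand side is constant in $z$ while the left-hand side is linear, so the functional $\omega(\cdot,z_0-z')$ must vanish identically, forcing $z'=z_0$ by nondegeneracy of $\omega$ (valid since $n\ge 1$) — which is impossible for a.e.\ $z'$. This contradiction yields $C_{d_\infty}<2$, and hence $\sigma_{2n+2}(\Hn,d_\infty)=C_{d_\infty}^{-1}>1/2$.
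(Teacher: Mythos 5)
Your proposal is correct and follows essentially the same route as the paper: the bound $C_{d_\infty}\le 2$ via the $1$-Lipschitz projection, the classical Euclidean isodiametric inequality and Fubini on the vertical fibers, and then an equality analysis showing that a maximizer would have full-interval fibers whose centers must satisfy $g(z')-g(z)=2\omega(z,z')$, which is incompatible with the antisymmetry and nondegeneracy of $\omega$. The only cosmetic difference is in the endgame: the paper left-translates so that the fiber over $0$ is $[0,4]$, which forces $g$ to be constant, and then finds a single offending pair via Lebesgue density points of $p(E)$ in a half-space, whereas you keep $g$ general, show it is affine, and contradict by a linear-algebra argument; both are valid, and your version does not even need the uniqueness statement for the Euclidean isodiametric equality case that you invoke.
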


\begin{proof}
First we note that due to Proposition~\ref{prop:sq} and using dilations the isodiametric constant can be rewritten as
\begin{equation*}
C_{d_\infty} = \dfrac{\sup \leb(A)}{\leb(B)}
\end{equation*}
where the supremum is taken over all compact subsets $A$ of $(\Hn,d_\infty)$ with $\diam A = 2$ and $B$ denotes the closed unit ball in $(\Hn,d_\infty)$. We have $B=\{[z,t]\in\Hn~;~\|z\|\leq 1~,~|t|\leq 1~\}$ hence
\begin{equation*}
 \leb(B) = 2\,\lnn(U)
\end{equation*}
where $U$ denotes the closed unit Euclidean ball in $\Rnn$ and $\lnn$ the $2n$-dimensional Lebesgue measure on $\Rnn$. Let $p:\Hn\rightarrow \Rnn$ denote the projection defined by $p([z,t]) = z$. Let $A$ be a compact subset of $(\Hn,d_\infty)$ with $\diam A = 2$. The map $p$ is 1-Lipschitz from $(\Hn,d_\infty)$ to the Euclidean space $(\Rnn,\|\cdot\|)$ hence
\begin{equation} \label{e:measproj}
 \lnn(p(A)) \leq \lnn(U)
\end{equation}
by the classical isodiametric inequality in $(\Rnn,\|\cdot\|)$. Next for any $z\in p(A)$, we have $|t-t'| \leq d_\infty([z,t],[z,t'])^2\leq 4$ whenever $[z,t]$ and $[z,t']\in A$ hence $A\cap p^{-1}(\{z\})$ is contained in some segment with length at most 4. It follows from Fubini's Theorem that 
\begin{equation*}
 \leb(A) \leq 4\,\lnn(U)
\end{equation*}
and hence
\begin{equation*}
 C_{d_\infty} \leq 2.
\end{equation*}

Assume by contradiction that $C_{d_\infty}=2$ and thus (see Theorem~\ref{thm:existence}) that one can find a compact subset $E$ in $(\Hn,d_\infty)$  with $\diam E = 2$ and such that $\leb(E) = 4\,\lnn(U)$. By the argument above we know that for any $z\in p(E)$ we have $\lun(E\cap p^{-1}(\{z\})) \leq 4$ where $\lun$ denotes the one-dimensional Lebesgue measure. On the other hand, remembering~\eqref{e:measproj}, we have
\begin{equation} \label{e:measE}
\int_{p(E)} (\lun(E\cap p^{-1}(\{z\})) -4)\,d\lnn(z) \geq \leb(E) - 4\,\lnn(U) = 0.
\end{equation}
It follows that $\lun(E\cap p^{-1}(\{z\})) = 4$ and consequently that $E\cap p^{-1}(\{z\})$ is a whole segment with length 4, $E\cap p^{-1}(\{z\}) = \{[z,t]~;~ t_z^- \leq t \leq t_z^+~\}$ for some $t_z^-$ and $t_z^+$ such that $t_z^+ - t_z^- = 4$, for $\lnn$-a.e.~$z\in p(E)$. Up to a translation one can assume that $0\in E$ and that $E\cap p^{-1}(\{0\}) = \{[0,t]~;~ 0 \leq t \leq 4~\}$. Then the fact that $d_\infty([0,4],[z,t_z^-])\leq 2$ implies in particular that $t_z^-\geq 0$ and on the other hand $d_\infty([0,0],[z,t_z^+])\leq 2$ implies that $t_z^+ \leq 4$ and we finally get that $t_z^- = 0$ and $t_z^+=4$ for $\lnn$-a.e.~$z\in p(E)$.

For $z=(x_1,\dots,x_{2n})\in\Rnn$ we consider now the half-space $H_z$ in $\Rnn$ defined by $H_z=\{z'=(x'_1,\dots,x'_{2n})\in\Rnn~;~\sum_{j=1}^n (x_{n+j} x'_j - x_j x'_{n+j})<0\}$. We have $z\in\partial H_z$. On the other hand, considering points in $p(E)$ where $p(E)$ has density one, we recall that
\begin{equation*}
 \lim_{r\downarrow 0} \dfrac{\lnn(p(E) \cap U(z,r))}{\lnn(U(z,r))} =1
\end{equation*}
for $\lnn$-a.e. $z\in p(E)$ where $U(z,r)$ denotes the (closed) ball in $(\Rnn,\|\cdot\|)$ with center $z$ and radius $r$. It follows that $\lnn(p(E) \cap H_z)>0$ for $\lnn$-a.e. $z\in p(E)$.

All together we finally get that one can find $z$ and $z' \in p(E)$ such that $t_{z}^- = t_{z'}^- = 0$, $t_{z}^+ = t_{z'}^+ = 4$ and $z'\in H_z$. It follows that 
\begin{equation*}
 \diam E \geq d_\infty([z, 0],[z',4]) \geq (4 - 2\sum_{j=1}^n (x_{n+j} x'_j - x_j x'_{n+j}))^{1/2} > 2
\end{equation*}
which gives a contradiction and concludes the proof.
\end{proof}

We consider now the Carnot-Carth\'eodory distance $d_c$ on $\Hn$ defined in the following way. We consider the left invariant Riemannian metric $g$ on $\Hn$ that makes $(X_1,\dots,X_n,Y_1,\dots,Y_n,\partial_t)$ an orthonormal basis and we define $d_c$ as in~\ref{ssubsec:dc}. Explicit description of balls in $(\Hn,d_c)$ are well-known. We refer to e.g. \cite{gaveau} or \cite{ambrig}. If we denote by $B$ the closed unit ball in $(\Hn,d_c)$ one has 
\begin{equation*}
B = \{~[\dfrac{\sin\vp}{\vp}\cdot\chi~,~\dfrac{2\vp - \sin(2\vp)}{2\vp^2}\,|\chi|^2]~;~\chi\in\Rnn~,~|\chi|\leq 1~,~\vp\in[-\pi,\pi]~\}.
\end{equation*}
It follows that
\begin{equation*}
\begin{split}
 \leb(B) &= 4n\alpha_{2n}\,\int_0^\pi \dfrac{2\vp - \sin(2\vp)}{2\vp^2} \cdot \left(\dfrac{\sin\vp}{\vp}\right)^{2n-1} \cdot \left|\left(\dfrac{\sin\vp}{\vp}\right)'\right| \, d\vp\\
&= 4n\alpha_{2n}\,\int_0^\pi \dfrac{2\vp - \sin(2\vp)}{2\vp^2} \cdot \left(\dfrac{\sin\vp}{\vp}\right)^{2n-1} \cdot \dfrac{\sin\vp -\vp\cos\vp}{\vp^2}~d\vp~.
\end{split}
\end{equation*}

Let $A$ be a compact subset of $(\Hn,d_c)$ with $\diam A = 2$. The projection $p$ defined as before by $p([z,t]) = z$ is 1-Lipschitz from $(\Hn,d_c)$ to  $(\Rnn,\|\cdot\|)$ hence \eqref{e:measproj} still holds for $A$. Next we recall that 
\begin{equation*}
 d_c([z,t],[z,t']) = (\pi|t-t'|)^{1/2}
\end{equation*}
hence $A\cap p^{-1}(\{z\})$ is contained in some segment with length at most $4/\pi$. It follows that 
\begin{equation*}
 \leb(A) \leq 4\pi^{-1}\alpha_{2n}.
\end{equation*}
All together this gives an upper bound for the isodiametric constant $C_{d_c}$ in $(\Hn,d_c)$,
\begin{equation*}
 C_{d_c} \leq \dfrac{1}{n\pi} \left(\int_0^\pi \dfrac{2\vp - \sin(2\vp)}{2\vp^2}\cdot\left(\dfrac{\sin\vp}{\vp}\right)^{2n-1}\cdot\dfrac{\sin\vp - \vp\cos\vp}{\vp^2}~d\vp~\right)^{-1}.
\end{equation*}
One can compute numerically this upper bound. For $n=1$ one gets an upper bound $\leq 1.22$. Then it increases for $n$ equals 1 to 8 up to a value $\leq 1.98$. For $n=9$ the upper bound is larger than 2. Thus we have the following 

\begin{thm} \label{prop:1/2bes-dc}
 Let $n\in\{1,\dots,8\}$. The isodiametric constant $C_{d_c}$ in $(\Hn,d_c)$ satisfies $C_{d_c}<2 $ and hence $\sigma_{2n+2}(\Hn,d_c)>1/2$.
\end{thm}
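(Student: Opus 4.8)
The plan is to observe that essentially all of the analytic work has already been carried out in the discussion preceding the statement, so that the proof reduces to a numerical verification followed by a single application of Theorem~\ref{thm:densconst}. Indeed, combining the classical isodiametric inequality in $(\Rnn,\|\cdot\|)$ applied to the $1$-Lipschitz projection $p$ (which gives \eqref{e:measproj}) with the slice bound coming from the identity $d_c([z,t],[z,t'])=(\pi|t-t'|)^{1/2}$, one already obtains the explicit upper bound
\begin{equation*}
 C_{d_c} \leq \dfrac{1}{n\pi} \left(\int_0^\pi \dfrac{2\vp - \sin(2\vp)}{2\vp^2}\cdot\left(\dfrac{\sin\vp}{\vp}\right)^{2n-1}\cdot\dfrac{\sin\vp - \vp\cos\vp}{\vp^2}~d\vp~\right)^{-1}.
\end{equation*}
Thus the entire content of the theorem lies in checking that the right-hand side is strictly smaller than $2$ for each $n\in\{1,\dots,8\}$.

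First I would record that the integrand is a smooth, strictly positive function on $(0,\pi)$: each of the factors $(\sin\vp)/\vp$, $(2\vp-\sin 2\vp)/(2\vp^2)$ and $(\sin\vp-\vp\cos\vp)/\vp^2$ extends continuously and positively to $\vp=0$ by a Taylor expansion, so the integral is a finite positive number and the displayed bound is well defined. Next I would evaluate this bound numerically for each of the eight admissible values of $n$ using a standard quadrature scheme. As reported in the text, this produces a value $\leq 1.22$ at $n=1$, increasing up to $\leq 1.98$ at $n=8$ (while the analogous bound already exceeds $2$ at $n=9$). Since in each of the cases $n=1,\dots,8$ the bound is strictly below $2$, we conclude that $C_{d_c}<2$ for every such $n$.

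Finally I would invoke Theorem~\ref{thm:densconst} with $Q=2n+2$, the homogeneous dimension of $\Hn$: this gives $\sigma_{2n+2}(\Hn,d_c)=C_{d_c}^{-1}$, so the strict inequality $C_{d_c}<2$ translates immediately into $\sigma_{2n+2}(\Hn,d_c)>1/2$, which is the asserted failure of the generalized Besicovitch $1/2$-bound.

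The main obstacle I expect is not conceptual but one of numerical rigor. At $n=8$ the upper bound is very close to the threshold (about $1.98$), so the quadrature must be trustworthy to roughly one percent in order for the strict inequality $C_{d_c}<2$ to be genuinely established rather than merely suggested. To make the argument fully rigorous one would want either explicit error control on the numerical integration, or a comparison/monotonicity estimate controlling how the integral decreases with $n$; the delicate point is precisely this last admissible case $n=8$, since the next value $n=9$ already violates the inequality and so leaves no safety margin.
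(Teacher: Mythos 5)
Your proposal is correct and follows exactly the paper's route: the bound $C_{d_c}\leq \frac{1}{n\pi}\bigl(\int_0^\pi \frac{2\vp-\sin(2\vp)}{2\vp^2}\cdot(\frac{\sin\vp}{\vp})^{2n-1}\cdot\frac{\sin\vp-\vp\cos\vp}{\vp^2}\,d\vp\bigr)^{-1}$ is established in the discussion preceding the statement, the theorem is then a numerical check for $n=1,\dots,8$, and the density statement follows from Theorem~\ref{thm:densconst}. Your remark about the need for rigorous error control near $n=8$ is a fair caveat that applies equally to the paper's own argument.
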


The previous argument leaves open the cases $n\geq 9$ and is probably far from being optimal. One could indeed probably expect that $C_{d_c}<2 $ holds in any dimension.



\end{document}